%

\documentclass[journal,twoside,web]{ieeecolor}
\definecolor{darkblue}{rgb}{0.0,0.4,1.0}
\usepackage{generic}
\usepackage{cite}
\usepackage{amsmath,amssymb,amsfonts}
\usepackage{graphicx}
\usepackage{algorithm}
\usepackage{hyperref}
\hypersetup{hidelinks=true}
\usepackage{textcomp}
\def\BibTeX{{\rm B\kern-.05em{\sc i\kern-.025em b}\kern-.08em
    T\kern-.1667em\lower.7ex\hbox{E}\kern-.125emX}}
\markboth{\hskip25pc IEEE TRANSACTIONS AND JOURNALS TEMPLATE}
{Author \MakeLowercase{\textit{et al.}}: Title}

\usepackage{amsmath} 
\usepackage{amssymb}
\usepackage{graphicx}
\usepackage{graphics}
\usepackage{cleveref}
\usepackage{algpseudocode}
\usepackage{subcaption}

\newcommand{\ssqcqp}[1]{SS-QCQP(#1)}

\newtheorem{assumption}{Assumption}
\newtheorem{theorem}{Theorem}
\newtheorem{lemma}{Lemma}

\newtheorem{definition}{Definition}
\newtheorem{remark}{Remark}

\begin{document}
\title{Anytime-Feasible First-Order Optimization via Safe Sequential QCQP}
%

\author{Jiarui Wang and Mahyar Fazlyab 
\thanks{The authors are with the Department of Electrical and Computer Engineering, Johns Hopkins University, Baltimore, MD 21218 USA (e-mail: \{jwang486, mahyarfazlyab\}@jhu.edu).}}

\maketitle

\begin{abstract}
This paper presents the Safe Sequential Quadratically Constrained Quadratic Programming (SS-QCQP) algorithm, a first-order method for smooth inequality-constrained nonconvex optimization that guarantees feasibility at every iteration.  
The method is derived from a continuous-time dynamical system whose vector field is obtained by solving a convex QCQP that enforces monotonic descent of the objective and forward invariance of the feasible set.  
The resulting continuous-time dynamics achieve an $O(1/t)$ convergence rate to first-order stationary points under standard constraint qualification conditions.  
We then propose a safeguarded Euler discretization with adaptive step-size selection that preserves this convergence rate while maintaining both descent and feasibility in discrete time.  
To enhance scalability, we develop an active-set variant (SS-QCQP-AS) that selectively enforces constraints near the boundary, substantially reducing computational cost without compromising theoretical guarantees.  
Numerical experiments on a multi-agent nonlinear optimal control problem demonstrate that SS-QCQP and SS-QCQP-AS maintain feasibility, exhibit the predicted convergence behavior, and deliver solution quality comparable to second-order solvers such as SQP and IPOPT.  
\end{abstract}

\begin{IEEEkeywords}
Constrained optimization, Nonconvex optimization, First-order methods, Anytime algorithms, Quadratically constrained quadratic programming (QCQP), Safe optimization, Control barrier functions.
\end{IEEEkeywords}

\section{INTRODUCTION}
Constrained optimization forms the foundation of numerous engineering disciplines, including optimal power flow~\cite{erseghe2014distributed}, optimal control~\cite{bagherzadeh2021guaranteed}, and robotics~\cite{wensing2023optimization}. In many practical settings, particularly when algorithms are deployed on high-frequency physical systems such as quadrotors, optimization must be executed in real time~\cite{nguyen2024tinympc}\cite{bishop2024relu}. This necessitates producing feasible solutions prior to full convergence while operating under strict computational limitations. Meeting these requirements poses fundamental challenges for the design of optimization methods that are simultaneously efficient, reliable, and capable of enforcing constraints at all intermediate iterates.

A key requirement in this setting is \emph{anytime feasibility}. This property guarantees that the algorithm can be safely interrupted at any time while still returning a feasible, though possibly suboptimal, solution. Such a property is essential in real-world deployments, where constraints frequently encode safety conditions; violating them may induce instability, equipment damage, or safety hazards.

Another desirable property is \emph{monotonic descent}, meaning that the objective value decreases at every iteration. Since premature termination is common in real-time settings, monotonic descent ensures consistent improvement over time and provides a quantifiable performance guarantee for any intermediate solution.

A third desirable property is \emph{scalability}. Modern machine learning, control, and robotics applications increasingly involve high-dimensional decision variables and large numbers of constraints, for example, model predictive control with long horizons, multi-agent coordination tasks, optimization problems arising in real-time perception and planning,  or data-driven verification~\cite{dietrich2024nonconvex}.
In such settings, each iteration must be computationally inexpensive, ideally relying only on first-order information, so that the algorithm can meet strict real-time deadlines and be deployed on resource-limited hardware.

Despite the importance of these properties, most classical algorithms do not satisfy them simultaneously. Interior-point methods~\cite{nemirovski2008interior,nocedal2009adaptive,wachter2002interior} enforce feasibility only asymptotically and generally allow infeasible iterates until convergence, which is incompatible with safety-critical applications. Sequential Quadratic Programming (SQP) methods~\cite{panier1987superlinearly,schittkowski1983convergence,panier1993combining} may violate constraints even when initialized at a feasible point due to the mismatch between linearized and nonlinear constraints. Penalty and augmented Lagrangian methods deliberately relax constraints and therefore cannot guarantee feasibility during intermediate iterations. Moreover, many of these classical methods require second-order information or the solution of large linear systems at each iteration, which limits scalability and hinders deployment in real-time environments. These limitations highlight the need for first-order optimization algorithms that simultaneously ensure anytime feasibility, monotonic descent, and scalability.

\subsubsection*{Our Contribution}
To address the limitations of existing constrained optimization methods, this paper proposes a new first-order framework for solving general inequality-constrained nonconvex problems. Our main contributions are as follows: (1) Adopting a control-theoretic perspective, we design a continuous-time dynamical system whose vector field is obtained by solving a convex quadratically constrained quadratic program (QCQP) that uses only first-order information. The dynamics guarantee forward invariance of the feasible set, monotonic descent, and global convergence to Karush–Kuhn–Tucker (KKT) points. We further show that the continuous-time system achieves an $O(1/t)$ ergodic convergence rate in
terms of a first-order stationary measure. (2) We discretize the continuous-time dynamics using a safeguarded Armijo-type line search. The discrete-time method preserves anytime feasibility and monotonic decrease of the objective and matches the continuous-time $O(1/t)$ rate.
(3) To enhance scalability in problems with many constraints, we introduce an active-set variant that enforces only a subset of nearly active constraints at each iteration, substantially reducing per-iteration computational cost.
(4) We establish convergence guarantees for both the full and active-set variants, showing that each achieves an $O(1/k)$ ergodic convergence rate in discrete time under standard Mangasarian-Fromovitz constraint qualification (MFCQ) and Lipschitz assumptions, matching the rate of classical gradient descent for unconstrained problems.
(5) We provide detailed implementation guidelines, including conic reformulations, solver considerations, and adaptive hyperparameter selection, to ensure strong empirical performance in practical deployments.

\subsection{Related Work}
\subsubsection{SQP Methods}
A widely used approach for solving nonconvex constrained optimization problems is to approximate them by a sequence of convex subproblems. The most prominent example is Sequential Quadratic Programming (SQP), which computes search directions by solving a convex quadratic program (QP) obtained from a second-order expansion of the Lagrangian and a first-order linearization of the constraints. Viewed analytically, SQP corresponds to applying Newton’s method to the KKT conditions. However, because linearized constraints only approximate the feasible region locally, the QP step can be overly optimistic—improving the objective while violating the true nonlinear constraints. Line search on an exact penalty function often forces very small step sizes, a limitation known as the Maratos effect~\cite{maratos1978exact}. Classical remedies, such as augmented Lagrangian merit functions~\cite{gill1986some} or second-order correction steps~\cite{mayne2009surperlinearly,byrd1987trust}, improve robustness but still do not guarantee anytime feasibility or monotonic descent. In model predictive control (MPC), SQP variants have been adapted to return feasible iterates, for instance through double-loop schemes~\cite{numerow2024inherently} or feasibility perturbation techniques~\cite{tenny2004nonlinear}. Nevertheless, these methods generally lack monotonic decrease guarantees and may still exhibit constraint violations during intermediate iterations.

\subsubsection{QCQP Methods}
Sequential quadratically constrained quadratic programming (SQCQP) methods follow a structure similar to SQP, but replace linearized constraints with quadratic approximations, leading to a QCQP subproblem at each iteration. Classical approaches construct second-order Taylor expansions of the constraints, which requires computing Hessians and generally yields nonconvex QCQPs. To avoid these difficulties, Fukushima~\cite{fukushima1986successive} proposed computing a search direction via a QCQP but, due to the lack of efficient QCQP solvers at the time, implemented this idea through two successive convex QPs. Variants for convex problems have also been explored~\cite{fukushima2003sequential}, though these methods still rely on line search with an $\ell_1$ penalty merit function and therefore do not guarantee anytime feasibility or monotonic descent.

A related line of work is the phase-I–phase-II SQCQP scheme of~\cite{west1992generalized}, which uses quadratic approximations of both the objective and the constraints sharing a common Hessian, chosen as a positive multiple of the identity. Once feasibility is achieved, this approach ensures both anytime feasibility and monotonic decrease, with convergence established under standard assumptions including twice differentiability, MFCQ, second-order sufficiency, and strict complementarity. However, the method requires including \emph{all} constraints in every QCQP subproblem, a significant limitation for large-scale problems with many constraints.

\subsubsection{First-Order Methods}
A growing body of work develops first-order algorithms for constrained optimization motivated by control theory~~\cite{allibhoy2023control,10.5555/3586589.3586845,raghunathan2025constrained} or Lipschitz-based surrogate modeling. One line of research designs continuous-time dynamics that guarantee invariance of the feasible set. In the safe gradient flow framework~\cite{allibhoy2021anytime}, each constraint is treated as a Control Barrier Function (CBF)~\cite{dawson2023safe}, yielding a vector field that follows the unconstrained gradient flow as closely as possible while enforcing feasibility in continuous time. However, when discretized, these flows generally lose their invariance properties--an issue closely related to the Maratos effect--so that anytime feasibility cannot be guaranteed in discrete time. A related formulation in~\cite{muehlebach2021constraints} provides a discrete-time update with convergence guarantees, but feasibility is not preserved at intermediate iterates and the stepsize depends on problem-dependent constants that are difficult to estimate in practice.

A complementary line of first-order methods constructs conservative local approximations of the objective and constraints using Lipschitz bounds. Representative examples include Successive Convex Approximation (SCA)~\cite{scutari2016parallel} and the Moving Ball Approximation (MBA) method~\cite{auslender2010moving}. While these approaches guarantee feasibility of the surrogate subproblem, they require knowledge of the Lipschitz constants of the gradients, which are often unavailable or hard to estimate. Lu et al.~\cite{lu2012sequential} proposed a line-search-based Sequential Convex Programming (SCP) algorithm that avoids explicit Lipschitz constants, but each line-search step involves solving a convex QCQP, resulting in substantially higher computational cost. Overall, existing first-order methods either lack anytime feasibility after discretization or rely on problem-dependent constants, limiting their applicability in real-time or large-scale settings.

\subsection{Notation}
We denote the space of $n$-dimensional real vectors by $\mathbb{R}^n$. For a vector $x \in \mathbb{R}^n$, $\|x\|_2$ denotes the Euclidean norm, and $\|x\|_P = \sqrt{x^\top P x}$ denotes the weighted $P$-norm, where $P \in \mathbb{R}^{n \times n}$ is symmetric positive definite ($P \succ 0$). For a natural number $m$, we use $[m] := \{1, 2, \ldots, m\}$ to denote the index set of the first $m$ integers. For $x \in \mathbb{R}^n$ and $r > 0$, $\mathcal{B}(x, r)$ denotes the open ball centered at $x$ with radius $r$.

\section{Background and Problem Statement}
This section introduces the constrained optimization problem studied in the paper, reviews the assumptions and optimality conditions used throughout, and summarizes key methods that motivate the development of our algorithm.

\subsection{Problem Setup and Assumptions}
We consider inequality-constrained, possibly nonconvex, optimization problems of the form
\begin{align}\label{eq:optimization_problem}
    \min_{x}~& f(x) \tag{OPT} \\
    \text{s.t.}~& g_i(x) \leq 0, \quad i \in [m], \notag
\end{align}
where $f:\mathbb{R}^n \rightarrow \mathbb{R}$ and $g_i:\mathbb{R}^n \rightarrow \mathbb{R}$ are continuously differentiable functions. We denote the feasible set as
\[
\mathcal{F} = \{x \in \mathbb{R}^n : g_i(x) \leq 0 ~\forall i \in [m]\}.
\] 
For $x \in \mathcal{F}$, we denote the set of active constraints at $x$ as $\mathcal{I}(x) = \{i \in [m] : g_i(x) = 0\}$.

\begin{definition}
    A continuously differentiable function $f:\mathbb{R}^n \rightarrow \mathbb{R}$ is $L$-smooth if its gradient is $L$-Lipschitz, i.e.,
    \begin{align*}
        \|\nabla f(x) - \nabla f(y)\|_2 \leq L\|x-y\|_2 ~\forall x,y.
    \end{align*}
\end{definition}
The $L$-smoothness of a function $f$ implies the following quadratic upper bound (see, e.g.,~\cite{nocedal2006numerical}):
\begin{align}\label{eq:l_smooth_upper_bound}
f(y) \le f(x) + \nabla f(x)^\top  (y - x) + \frac{L}{2}\|y - x\|_2^2 ~\forall x,y.
\end{align}

\begin{definition}[MFCQ]
    We say the Mangasarian-Fromovitz constraint qualification (MFCQ) holds at $x$ for \eqref{eq:optimization_problem} if $\exists d \in \mathbb{R}^n$ such that $\nabla g_i(x)^\top d < 0 ~ \forall i \in \mathcal{I}(x)$.
\end{definition}

\begin{assumption} \label{ass:assumption}
Throughout the paper, the following assumptions are imposed on~\eqref{eq:optimization_problem}.
\begin{enumerate} 
    \item [1.] $f$ is $L_f$-smooth and $g_i$ is $L_i$-smooth $\forall i \in [m]$.
    \item [2.] \eqref{eq:optimization_problem} is feasible and bounded below, and the optimal value is achieved at $x^\star$. 
    \item [3.] The Mangasarian-Fromovitz constraint qualification (MFCQ) holds for all $x \in \mathcal{F}$.
\end{enumerate}
\end{assumption}

\begin{definition}\label{def:stationary_point}
    A point $x \in \mathbb{R}^n$ is said to be \emph{stationary} for~\eqref{eq:optimization_problem} if there exists $\lambda \in \mathbb{R}^m$ such that the pair $(x,\lambda)$ satisfies the Karush–Kuhn–Tucker (KKT) conditions:
    \begin{subequations} \label{eq:KKT_conditions}
    \begin{align} 
        \nabla f(x) + \sum_{i=1}^m \lambda_i \nabla g_i(x) = 0, \label{eq:KKT_stationarity}\\
        g_i(x) \leq 0 ~ \forall i \in [m], \label{eq:KKT_primal_feasibility}\\
        \lambda \geq 0, \label{eq:KKT_dual_feasibility}\\
        \lambda_i g_i(x) = 0 ~ \forall i \in [m]. \label{eq:KKT_complementarity}
    \end{align}
    \end{subequations}
\end{definition}

\subsection{Background}

\subsubsection{Safe Gradient Flow} \label{sec:safe_gradient_flow}
Leveraging the theory of Control Barrier Functions (CBFs)~\cite{ames2016control}, 
Allibhoy \emph{et al.}~\cite{allibhoy2023control} introduced the \emph{safe gradient flow} dynamics for solving constrained optimization problems of the form~\eqref{eq:optimization_problem}. Specifically, the safe gradient flow is defined as
\begin{align}\label{eq:continuous_limit}
    \dot{x} = \tilde{u}(x),
\end{align}
where the velocity field $\tilde{u}(x)$ is obtained as the solution of the following convex quadratic program (QP):
\begin{align}\label{eq:direction_qp}
    \tilde{u}(x) := \arg\min_{u}~& \frac{1}{2}\|u + \nabla f(x)\|_2^2 \\
    \text{s.t.}~& \nabla g_i(x)^\top u \le -\alpha g_i(x), \quad \forall i \in [m], \notag
\end{align}
with $\alpha > 0$ a prescribed parameter. This QP can be interpreted in two complementary ways.

From a \emph{control-theoretic} perspective, the QP computes the smallest correction to the unconstrained gradient descent direction that ensures satisfaction of the differential inequalities
\begin{align}\label{eq:invariance_condition}
    \dot{g}_i(x) + \alpha g_i(x) \le 0, \quad \forall i \in [m],
\end{align}
which serve as control barrier conditions. These inequalities guarantee both \emph{forward invariance} and \emph{exponential stability} of the feasible set. 
In particular, if the initial condition is feasible, $g_i(x(0)) \le 0$, then $g_i(x(t)) \le 0$ for all $t \ge 0$; 
moreover, if a constraint is initially violated, $g_i(x(0)) > 0$, its violation decays exponentially, i.e.,
\[
g_i(x(t)) \le g_i(x(0)) e^{-\alpha t}.
\]
From an \emph{optimization} standpoint, when $\alpha = 1$, the QP in~\eqref{eq:direction_qp} projects the negative gradient $-\nabla f(x)$ onto the linearized feasible set
\begin{align}\label{eq:projection_interpretation}
    \{\, u : \nabla g_i(x)^\top  u \le - g_i(x),~ \forall i \in [m] \,\},
\end{align}
This coincides with the SQP search direction under an identity Hessian approximation of the Lagrangian and has also appeared in the MPC literature as a means of generating feasible descent directions~\cite{torrisi2018projected}.

Under the Mangasarian–Fromovitz constraint qualification (MFCQ) and the constant rank condition (CRC)~\cite{liu1995sensitivity}, the mapping $x \mapsto \tilde{u}(x)$ is locally Lipschitz~\cite{allibhoy2023control}.
This guarantees existence and uniqueness of solutions of~\eqref{eq:continuous_limit}.
Moreover, if $x^\star$ satisfies $\tilde{u}(x^\star)=0$, then $(x^\star,\lambda^\star)$ satisfies the KKT conditions of~\eqref{eq:optimization_problem}, and conversely.
If $x^\star$ is an isolated stationary point, it is asymptotically stable; exponential stability follows under LICQ, strict complementarity, and the second-order sufficient condition~\cite[Thm.~5.7]{allibhoy2023control}.

Although these results provide strong guarantees for the continuous-time system, the analysis in~\cite{allibhoy2023control} does not address the discretization required for numerical implementation.
In particular, the anytime feasibility property—which holds exactly in continuous time—may be lost upon discretization.

\subsubsection{Constrained Gradient Flow (Descent)} \label{sec:constrained_gradient_flow}
In closely related work, Muehlebach \emph{et al.}~\cite{muehlebach2021constraints} proposed the \emph{constrained gradient flow}, in which the invariance condition~\eqref{eq:invariance_condition} is enforced only for active or violated constraints.
This modification reduces the number of constraints appearing in the QP~\eqref{eq:direction_qp}, but it renders the mapping $x \mapsto \tilde{u}(x)$ nonsmooth.
The authors further developed a constant–step size Euler discretization of the resulting dynamics and established that, under convexity assumptions on problem~\eqref{eq:optimization_problem}, the squared distance to an optimal solution decreases at a rate of $\mathcal{O}(1/k)$. However, the discrete-time scheme in~\cite{muehlebach2021constraints} does not guarantee feasibility of the iterates, and its admissible step size depends on problem-dependent constants that are often difficult to estimate in practice.

In the next section, we address these issues by developing a new continuous-time dynamical system and an associated safeguarded discretization procedure that provably preserves feasibility at every iteration while ensuring monotonic descent of the objective.

\section{Safe Sequential QCQP}\label{sec:sequential_qcqp}
In this section, we develop a continuous-time dynamical system $\dot{x}=u(x)$ and an associated Euler discretization for solving problem~\eqref{eq:optimization_problem}.
Given an iterate $x^{(k)}$, the update takes the form
\begin{align}\label{eq:update_rule}
x^{(k+1)} = x^{(k)} + t^{(k)} u(x^{(k)}),
\end{align}
where the vector field (or search direction) $u(x^{(k)})$ and step size $t^{(k)}>0$ are designed to guarantee two fundamental properties: \textit{(1) Anytime feasibility:}  
If the initial iterate $x^{(0)}$ satisfies $g_i(x^{(0)}) \le 0$ for all $i \in [m]$, then feasibility is maintained at every iteration, i.e.,
\begin{align}\label{eq:anytime_feasibility}
    g_i(x^{(k)}) \le 0, \quad \forall k \ge 0,~ i \in [m].
\end{align}
\textit{(2) Monotonic descent:}  
The sequence of objective values is strictly decreasing,
\begin{align}\label{eq:monotonic_descent}
    f(x^{(k+1)}) < f(x^{(k)}), \quad \forall k \ge 0.
\end{align}

The proposed method, referred to as the \emph{Safe Sequential QCQP (SS-QCQP) algorithm}, constructs $u(x^{(k)})$ by solving a convex quadratically constrained quadratic program (QCQP) that locally approximates~\eqref{eq:optimization_problem} while enforcing feasibility (\Cref{sec:construction_of_search_direction}). 
The step size $t_k$ is then selected to guarantee both a sufficient decrease in the objective and preservation of safety along the resulting feasible direction (\Cref{subsec:step_size_selection}).

\subsection{ODE Construction} \label{sec:construction_of_search_direction}
Discretizing the safe gradient flow~\eqref{eq:continuous_limit} into an iterative algorithm introduces a fundamental difficulty: the linearization of the constraints in~\eqref{eq:direction_qp} neglects curvature, which can cause infeasibility for any positive step size. To illustrate this, consider a point $x$ where $g_i(x) = 0$ for some $i \in [m]$ (an active constraint). 
In this case, the feasible set of~\eqref{eq:direction_qp} permits
\begin{align}\label{eq:tangent_condition}
    \nabla g_i(x)^\top  \tilde{u}(x) = 0,
\end{align}
meaning that $\tilde{u}(x)$ can be tangent to the zero-level set of $g_i$. 
Consequently, taking any positive step in the direction of $\tilde{u}(x)$ may violate feasibility, i.e.,
\begin{align}\label{eq:maratos_infeasibility}
    g_i(x + t \tilde{u}(x)) > 0, \quad \forall\, t > 0.
\end{align}
This phenomenon, similar to the \emph{Maratos effect}, arises from the mismatch between the local linear approximation of the constraints and their true nonlinear geometry.

To mitigate this limitation, we incorporate a quadratic correction term into the feasibility constraints of the QP~\eqref{eq:direction_qp}, leading to the following convex quadratically constrained quadratic program (QCQP):

\begin{align}\label{eq:direction_qcqp} \tag{\ssqcqp{$x$}}
    u(x) := \arg\min_{u}~& \frac{1}{2}\|u + \nabla f(x)\|_2^2 \\
    \text{s.t.}~& \nabla g_i(x)^\top  u \le -\alpha g_i(x) - w_i\|u\|_2^2,
    \  \forall i \in [m], \notag
\end{align}
where $\alpha > 0$ and $w_i > 0$ are prescribed parameters. 
When $g_i(x) = 0$, i.e., when $x$ lies on the boundary of the feasible region, the additional quadratic term $w_i\|u\|_2^2$ ensures that
\begin{align}\label{eq:strict_tangent_negativity}
    \nabla g_i(x)^\top  u(x) < 0,
\end{align}
thereby preventing the vector field from being tangent to the constraint boundary.  Geometrically, the penalty $w_i|u|^2$ tilts the direction towards the interior of the feasible set (see~\Cref{fig:directions}), thereby providing a curvature-aware safeguard absent in the original QP formulation.

In the next subsection, we establish key properties of the direction field $u(x)$ defined by~\eqref{eq:direction_qcqp}, including feasibility preservation, regularity, and its correspondence with KKT points of~\eqref{eq:optimization_problem}.

\begin{figure} 
    \centering
    \includegraphics[width=0.75\linewidth, trim=10 10 10 10]{./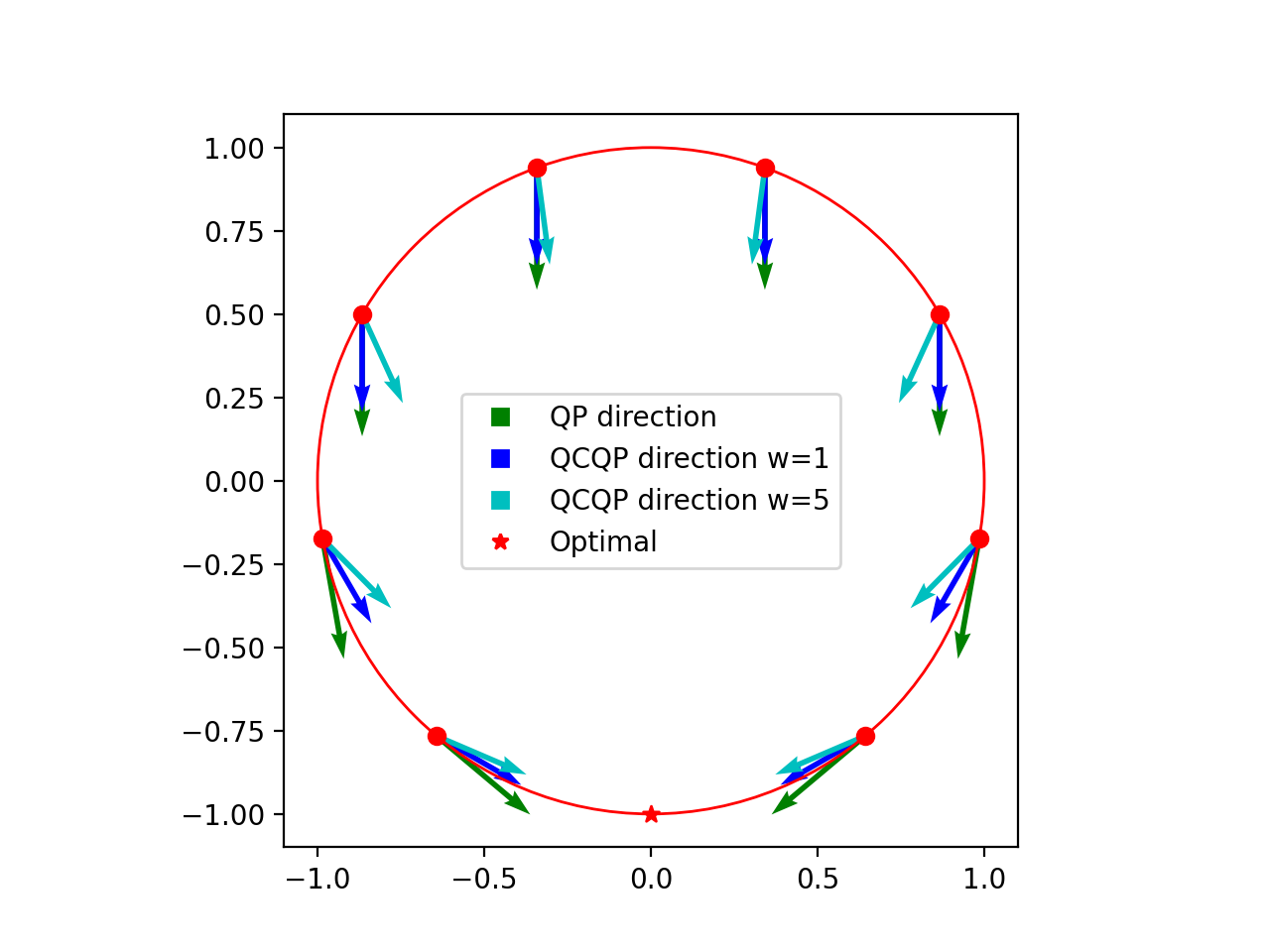}
    \caption{The search directions computed by solving the QP \eqref{eq:direction_qp} and the QCQP \eqref{eq:direction_qcqp} where $x = \begin{bmatrix}x_1, x_2\end{bmatrix}^\top $, the objective $f(x)=x_2$ is a linear function and the constraint $g(x)=\|x\|_2^2-1 \leq 0$ is a unit ball constraint. Here we scale the search directions differently so that they do not overlap. When $x$ is on the boundary of the feasible region, i.e., $g(x)=0$, \eqref{eq:direction_qp} might generate a direction tangent to the boundary, while \eqref{eq:direction_qcqp} tilts the direction towards the interior of the feasible region.}
    \label{fig:directions}
\end{figure}

\subsection{Properties of the Search Direction} \label{sec:properties_of_search_direction}
We begin by showing that~\eqref{eq:direction_qcqp} always admits a strictly feasible point whenever the original problem~\eqref{eq:optimization_problem} satisfies MFCQ. Because the QCQP is convex, strict feasibility guarantees that the optimizer $u(x)$ exists and is unique.

\begin{lemma}[Strict Feasibility of~\eqref{eq:direction_qcqp}]
\label{lem:qcqp_strictly_feasible}
Suppose the MFCQ condition holds for~\eqref{eq:optimization_problem} and that $x$ is feasible. 
Then ~\eqref{eq:direction_qcqp} is strictly feasible.
\end{lemma}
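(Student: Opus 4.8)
The plan is to exhibit a single direction $u$ that strictly satisfies every constraint of \eqref{eq:direction_qcqp}, i.e., a point with
\[
\nabla g_i(x)^\top u + w_i\|u\|_2^2 + \alpha g_i(x) < 0 \quad \text{for all } i \in [m].
\]
Because \eqref{eq:direction_qcqp} is a convex QCQP with convex quadratic constraints (each $w_i > 0$), producing such a point is precisely Slater's condition and hence certifies strict feasibility. The natural candidate comes from the MFCQ hypothesis: there exists $d \in \mathbb{R}^n$ with $\nabla g_i(x)^\top d < 0$ for every active index $i \in \mathcal{I}(x)$. I would then test the scaled direction $u = \epsilon d$ and argue that $\epsilon > 0$ can be taken small enough to handle all $m$ constraints at once.

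The verification splits according to whether a constraint is active at $x$. For an active constraint $i \in \mathcal{I}(x)$ we have $g_i(x) = 0$, so the requirement reduces to $\epsilon\,\nabla g_i(x)^\top d + w_i \epsilon^2 \|d\|_2^2 < 0$; dividing by $\epsilon > 0$ gives $\nabla g_i(x)^\top d + \epsilon\, w_i \|d\|_2^2 < 0$. The key observation is that, after this normalization, the MFCQ term $\nabla g_i(x)^\top d$ is a strictly negative constant while the quadratic penalty contributes only an $O(\epsilon)$ perturbation, so the inequality holds as soon as $\epsilon < -\nabla g_i(x)^\top d / (w_i\|d\|_2^2)$. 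For an inactive constraint $i \notin \mathcal{I}(x)$ we have $g_i(x) < 0$, hence the right-hand side $-\alpha g_i(x)$ is a strictly positive constant, whereas the left-hand side $\epsilon\,\nabla g_i(x)^\top d + w_i \epsilon^2 \|d\|_2^2$ tends to $0$ as $\epsilon \to 0$; continuity then yields the strict inequality for all sufficiently small $\epsilon$.

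Since $[m]$ is finite, I would set $\epsilon$ to the minimum of the finitely many positive thresholds obtained above, so that $u = \epsilon d$ satisfies every constraint strictly and simultaneously. This establishes strict feasibility of \eqref{eq:direction_qcqp}; convexity together with strict feasibility (coercivity of the objective in $u$) then delivers existence and uniqueness of the minimizer $u(x)$.

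The main subtlety to get right is the interaction between the curvature penalty $w_i\|u\|_2^2$ and the linear MFCQ term on the active constraints: at first glance the penalty makes the constraint \emph{harder} to satisfy, but scaling $u = \epsilon d$ renders it higher order ($O(\epsilon^2)$) relative to the strictly negative linear term ($O(\epsilon)$), so it cannot overwhelm the strict descent furnished by MFCQ. Everything else is a routine finiteness-and-continuity argument, and one should take care only to state the active/inactive dichotomy cleanly so that a single $\epsilon$ works uniformly.
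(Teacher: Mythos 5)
Your proposal is correct and follows essentially the same argument as the paper: scale the MFCQ direction by a small factor, handle active constraints via the threshold $\epsilon < -\nabla g_i(x)^\top d/(w_i\|d\|_2^2)$, handle inactive constraints by noting the left-hand side vanishes as $\epsilon \to 0$ while $-\alpha g_i(x) > 0$, and take the minimum over finitely many thresholds. The only cosmetic difference is that the paper writes out the explicit positive root of the quadratic in the scaling parameter for the inactive case, where you invoke a continuity argument; these are the same reasoning.
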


\begin{proof}
Under MFCQ, there exists a vector $u$ such that
\[
\nabla g_i(x)^\top  u < 0, \quad \forall i \in \mathcal{I}(x),
\]
where $\mathcal{I}(x) := \{i \in [m] : g_i(x) = 0\}$ denotes the set of active constraints. We consider two cases.

\smallskip
\noindent\textbf{Case 1:} $i \in \mathcal{I}(x)$ (active constraints).  
Since $g_i(x) = 0$ and $\nabla g_i(x)^\top  u < 0$, choose any
\[
0 < \eta_1 < \min_{i \in \mathcal{I}(x)} 
    \frac{-\nabla g_i(x)^\top  u}{w_i \|u\|_2^2}.
\]
Then $\hat{u}_1 := \eta_1 u$ satisfies
\[
\nabla g_i(x)^\top  \hat{u}_1 
\!=\! \eta_1 \nabla g_i(x)^\top  u 
\!<\! - w_i \|\hat{u}_1\|_2^2
\!=\! - \alpha g_i(x) \!-\! w_i \|\hat{u}_1\|_2^2,
\]
for all $i \in \mathcal{I}(x)$.

\smallskip
\noindent\textbf{Case 2:} $i \notin \mathcal{I}(x)$ (inactive constraints).  
For these indices, $g_i(x) < 0$. The inequality
\[
\nabla g_i(x)^\top  (\eta u)
\le -\alpha g_i(x) - w_i \|\eta u\|_2^2
\]
is quadratic in $\eta$. It holds strictly for any
\[
0 < \eta_2 < 
\min_{i \notin \mathcal{I}(x)}
\tfrac{-\nabla g_i(x)^\top  u + 
\sqrt{(\nabla g_i(x)^\top  u)^2 
- 4 \alpha w_i \|u\|_2^2 g_i(x)}}
{2 w_i \|u\|_2^2}.
\]

\smallskip
\noindent Finally, let $\eta := \min\{\eta_1, \eta_2\}$ and set $\hat{u} := \eta u$.  
Then $\hat{u}$ satisfies
\[
\nabla g_i(x)^\top  \hat{u} 
< -\alpha g_i(x) - w_i \|\hat{u}\|_2^2,
\quad \forall i \in [m],
\]
showing that~\eqref{eq:direction_qcqp} is strictly feasible.
\end{proof}

\begin{theorem}\label{thm:direction_qcqp}
Assume that problem~\eqref{eq:optimization_problem} satisfies~\Cref{ass:assumption}, and let $x$ be a feasible point of~\eqref{eq:optimization_problem}. Then the solution $u(x)$ to~\eqref{eq:direction_qcqp} satisfies:
\begin{enumerate}
    \item $u(x) = 0$ if and only if $x$ is a stationary point of~\eqref{eq:optimization_problem}. 
    \item The search direction is a descent direction for $f$, i.e.,
    \begin{align}\label{eq:descent_property}
        \nabla f(x)^\top  u(x) \le -\|u(x)\|_2^2.
    \end{align}
\end{enumerate}
\end{theorem}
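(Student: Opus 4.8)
The plan is to base the entire argument on the Karush–Kuhn–Tucker (KKT) system of \eqref{eq:direction_qcqp}. The key preliminary observation is that \eqref{eq:direction_qcqp} is a convex program: its objective $\tfrac12\|u+\nabla f(x)\|_2^2$ is strictly convex in $u$ (Hessian $I\succ 0$), and each constraint function $\nabla g_i(x)^\top u + \alpha g_i(x) + w_i\|u\|_2^2$ is convex because $w_i>0$. By \Cref{lem:qcqp_strictly_feasible} the feasible region satisfies Slater's condition, so strong duality holds and the KKT conditions are \emph{both necessary and sufficient} for optimality. Introducing multipliers $\lambda_i\ge 0$ and differentiating the Lagrangian in $u$, the stationarity condition reads
$$(1+2W)\,u(x) = -\nabla f(x) - \sum_{i=1}^m \lambda_i \nabla g_i(x), \qquad W := \sum_{i=1}^m \lambda_i w_i \ge 0,$$
and is accompanied by dual feasibility $\lambda\ge 0$, primal feasibility, and complementary slackness $\lambda_i\big(\nabla g_i(x)^\top u(x) + \alpha g_i(x) + w_i\|u(x)\|_2^2\big)=0$.

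For the descent property (item 2), I would take the inner product of the stationarity identity with $u(x)$ and then eliminate the constraint-gradient terms using complementary slackness. For every $i$ (trivially when $\lambda_i=0$), complementary slackness gives $\lambda_i\nabla g_i(x)^\top u(x) = -\alpha\lambda_i g_i(x) - \lambda_i w_i\|u(x)\|_2^2$, so summing yields $-\sum_i\lambda_i\nabla g_i(x)^\top u(x) = \alpha\sum_i\lambda_i g_i(x) + W\|u(x)\|_2^2$. Substituting and rearranging produces the exact identity
$$\nabla f(x)^\top u(x) = -(1+W)\,\|u(x)\|_2^2 + \alpha\sum_{i=1}^m \lambda_i g_i(x).$$
Since $x$ is feasible ($g_i(x)\le 0$) and $\lambda_i\ge 0$, the final sum is nonpositive, and $W\ge 0$, so $\nabla f(x)^\top u(x) \le -(1+W)\|u(x)\|_2^2 \le -\|u(x)\|_2^2$, which is precisely \eqref{eq:descent_property}.

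For item 1, I would argue both directions from the same KKT system. If $u(x)=0$, the stationarity identity collapses to $\nabla f(x)+\sum_i\lambda_i\nabla g_i(x)=0$, and complementary slackness reduces to $\alpha\lambda_i g_i(x)=0$; because $\alpha>0$ this forces $\lambda_i g_i(x)=0$, so the pair $(x,\lambda)$ satisfies every condition of \eqref{eq:KKT_conditions} and $x$ is stationary. Conversely, if $x$ is stationary with multiplier $\lambda^\star$, I would certify that the pair $(u,\lambda)=(0,\lambda^\star)$ satisfies all four KKT conditions of \eqref{eq:direction_qcqp}: stationarity reduces to \eqref{eq:KKT_stationarity}, primal feasibility becomes $0\le -\alpha g_i(x)$ which holds since $g_i(x)\le 0$, dual feasibility is inherited, and complementary slackness reduces to $\alpha\lambda_i^\star g_i(x)=0$, true by \eqref{eq:KKT_complementarity}. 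Sufficiency of the KKT conditions for this convex QCQP then forces $u(x)=0$.

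The main obstacle is conceptual rather than computational: the whole proof rests on certifying that the KKT conditions fully characterize the optimizer, which is why invoking \Cref{lem:qcqp_strictly_feasible} for Slater's condition and noting the convexity contributed by the $w_i\|u\|_2^2$ terms must come first. Once that is secured, the remaining effort is careful multiplier bookkeeping—using complementary slackness to remove the $\nabla g_i(x)^\top u(x)$ terms and tracking the nonnegative scalar $W$, whose presence is exactly what yields the extra slack in the descent inequality.
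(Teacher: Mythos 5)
Your proposal is correct and follows essentially the same route as the paper's proof: invoke \Cref{lem:qcqp_strictly_feasible} for Slater's condition so that the KKT system of the convex QCQP is necessary and sufficient, take the inner product of stationarity with $u(x)$ and eliminate the constraint-gradient terms via complementary slackness to get the descent inequality, and prove both directions of item 1 by matching the two KKT systems (with uniqueness from strict convexity closing the converse). The only cosmetic difference is your abbreviation $W=\sum_i\lambda_i w_i$, which the paper carries out explicitly.
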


\begin{proof}
Since~\eqref{eq:direction_qcqp} is convex and strictly feasible (by the preceding lemma), strong duality holds and the KKT conditions are necessary and sufficient~\cite{boyd2004convex}.
Define
\[
c_i(u;x) \;:=\; \nabla g_i(x)^\top  u \;+\; w_i\|u\|_2^2 \;+\; \alpha g_i(x),\quad i\in[m].
\]
The KKT conditions of~\eqref{eq:direction_qcqp} state that there exists $\lambda(x)\in\mathbb{R}^m_{\ge 0}$ such that
\begin{subequations}\label{eq:QCQP_KKT}
\begin{align}
    &u(x) + \nabla f(x) + \sum_{i=1}^m \lambda_i(x)\big(\nabla g_i(x) + 2w_i u(x)\big) = 0, \label{eq:QCQP_KKT_stationarity}\\
    &c_i\big(u(x);x\big) \le 0,\quad i\in[m], \label{eq:QCQP_KKT_primal}\\
    &\lambda_i(x) \ge 0,\quad i\in[m], \label{eq:QCQP_KKT_dual}\\
    &\lambda_i(x)\,c_i\big(u(x);x\big) = 0,\quad i\in[m]. \label{eq:QCQP_KKT_comp}
\end{align}
\end{subequations}

\noindent\textit{Part (1):}
($\implies$) Setting $u(x)=0$ in~\eqref{eq:QCQP_KKT_stationarity} gives
\[
\nabla f(x) + \sum_{i=1}^m \lambda_i(x)\,\nabla g_i(x) = 0.
\]
Complementarity~\eqref{eq:QCQP_KKT_comp} with $u(x)=0$ yields
\(
\alpha \lambda_i(x) g_i(x) = 0
\),
so $\lambda_i(x)=0$ whenever $g_i(x)<0$, while $\lambda_i(x)\ge 0$ is allowed if $g_i(x)=0$.
Thus, $x$ together with multipliers satisfies the KKT conditions of~\eqref{eq:optimization_problem}, proving stationarity.

($\impliedby$) Let $x$ be a stationary point of \eqref{eq:optimization_problem}. By \Cref{def:stationary_point}, there exists $\lambda$ such that $(x,\lambda)$ satisfies \eqref{eq:KKT_conditions}. Letting $u=0$, \eqref{eq:KKT_stationarity} gives 
\[
u + \nabla f(x) + \sum_{i=1}^m \lambda_i(x)\big(\nabla g_i(x) + 2w_i u\big) = 0.
\]
Complementarity \eqref{eq:KKT_complementarity} yields $\lambda_i c_i(0;x) = 0$, and \eqref{eq:KKT_primal_feasibility} yields $c_i(0, ;x) = \alpha g_i(x) \leq 0$. Thus, the pair $(0, \lambda)$ satisfies the KKT conditions \eqref{eq:QCQP_KKT}. Since \eqref{eq:direction_qcqp} is a convex problem with a strongly convex objective, $u=0$ is the unique solution to \eqref{eq:direction_qcqp}.

\smallskip
\noindent\textit{Part (2):}
Taking the inner product of~\eqref{eq:QCQP_KKT_stationarity} with $u(x)$ gives
\begin{small}
    \begin{align*}
         &\nabla f(x)^\top  u(x) \\
        =& -\big(1 + 2\sum_{i=1}^m \lambda_i(x) w_i \big) \|u(x)\|_2^2 - \sum_{i=1}^m \lambda_i(x) \nabla g_i(x)^\top  u(x) \\
        =& -\big(1 + 2\sum_{i=1}^m \lambda_i(x) w_i\big) \|u(x)\|_2^2 + \sum_{i=1}^m \lambda_i(x) \big( \alpha g_i(x) + w_i \|u(x)\|_2^2\big) \\
        =& -\big(1 + \sum_{i=1}^m \lambda_i(x) w_i\big) \|u(x)\|_2^2 + \sum_{i=1}^m \lambda_i(x) \alpha g_i(x) \\
        \leq& - \|u(x)\|_2^2.
    \end{align*}
    \end{small}
    Here we use the complementarity \eqref{eq:QCQP_KKT_comp} to get the second equality.
\end{proof}

\subsection{Convergence Analysis of Continuous-Time Dynamics}
With the properties of the search direction $u(x)$ established in~\Cref{thm:direction_qcqp}, we define the continuous-time system 
\begin{align}\label{eq:ode_qcqp}
    \dot{x}(t)=u(x(t)) \quad \text{with } x(0) \text{ given}
\end{align}
to solve~\eqref{eq:optimization_problem}, where $u(x)$ is the unique solution to ~\eqref{eq:direction_qcqp}. For this ODE to be well posed, we must ensure that the mapping $x \mapsto u(x)$ is continuous on a neighborhood of the feasible set~$\mathcal F$.
This property guarantees the existence of solutions to the ODE. In the following lemma, we will first extend the strict feasibility result of Lemma~\ref{lem:qcqp_strictly_feasible} from feasible points to an entire open neighborhood of~$\mathcal F$. Then, we will show that every solution trajectory converges to the set of first-order stationary points of~\eqref{eq:optimization_problem}.

\begin{lemma}\label{lem:qcqp_strictly_feasible_open_set}
There exists an open set $U$ containing $\mathcal{F}$ such that the QCQP~\eqref{eq:direction_qcqp} is strictly feasible for all $x \in U$.
\end{lemma}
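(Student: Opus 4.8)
The plan is to promote the pointwise strict feasibility established in \Cref{lem:qcqp_strictly_feasible} to a full neighborhood of $\mathcal{F}$ by a local-to-global (open cover) argument driven by continuity of the constraint data. For $i \in [m]$ I define the feasibility residual $h_i(x,u) := \nabla g_i(x)^\top u + w_i \|u\|_2^2 + \alpha g_i(x)$, so that a vector $u$ certifies strict feasibility of \eqref{eq:direction_qcqp} at $x$ precisely when $h_i(x,u) < 0$ for every $i$. Since each $g_i$ is $C^1$, the maps $x \mapsto g_i(x)$ and $x \mapsto \nabla g_i(x)$ are continuous, so each $h_i$ is jointly continuous in $(x,u)$. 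The key structural observation is that the direction certifying strict feasibility need not vary with $x$: I will fix the certificate obtained at a nearby feasible point and let continuity absorb small perturbations of $x$.

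Concretely, I would fix an arbitrary $x_0 \in \mathcal{F}$ and invoke \Cref{lem:qcqp_strictly_feasible} to obtain a vector $\hat u_{x_0}$ with $h_i(x_0, \hat u_{x_0}) < 0$ for all $i \in [m]$. Because $[m]$ is finite, the scalar $\varepsilon_{x_0} := -\max_{i \in [m]} h_i(x_0, \hat u_{x_0})$ is strictly positive. Freezing the second argument at $\hat u_{x_0}$, each map $x \mapsto h_i(x, \hat u_{x_0})$ is continuous, so there is a radius $r_{x_0} > 0$ such that $|h_i(x, \hat u_{x_0}) - h_i(x_0, \hat u_{x_0})| < \varepsilon_{x_0}$ for all $i$ and all $x \in \mathcal{B}(x_0, r_{x_0})$ (take the minimum over the $m$ per-index radii). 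For such $x$ we then have $h_i(x, \hat u_{x_0}) < h_i(x_0, \hat u_{x_0}) + \varepsilon_{x_0} \le 0$, so the fixed direction $\hat u_{x_0}$ still strictly satisfies all constraints and \eqref{eq:direction_qcqp} is strictly feasible throughout $\mathcal{B}(x_0, r_{x_0})$.

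Finally, I would set $U := \bigcup_{x_0 \in \mathcal{F}} \mathcal{B}(x_0, r_{x_0})$. This set is open as a union of open balls, and it contains $\mathcal{F}$ since $x_0 \in \mathcal{B}(x_0, r_{x_0})$ for every $x_0 \in \mathcal{F}$; moreover every $x \in U$ lies in some ball $\mathcal{B}(x_0, r_{x_0})$ on which strict feasibility was just shown, which proves the claim. There is no substantial obstacle here: the argument is a routine continuity-based open-cover gluing, and the only points requiring care are (i) fixing the feasibility certificate $\hat u_{x_0}$ rather than trying to track a continuously varying direction, and (ii) using the finiteness of $[m]$ to extract a single positive margin $\varepsilon_{x_0}$ and a single radius $r_{x_0}$. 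I would deliberately avoid the tempting alternative of defining $U$ as a uniform sublevel set $\{x : g_i(x) < \delta\ \forall i\}$, since producing a single $\delta$ that works everywhere would require a uniform MFCQ margin over all of $\mathcal{F}$ — something not guaranteed without compactness — whereas the open-cover construction needs only the already-established pointwise result.
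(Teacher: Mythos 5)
Your proof is correct and follows essentially the same route as the paper's: fix the strict-feasibility certificate $u$ from \Cref{lem:qcqp_strictly_feasible} at each feasible point, use continuity of $g_i$ and $\nabla g_i$ (the paper invokes $L_i$-smoothness, you invoke $C^1$ regularity, which suffices) to show the same fixed direction remains strictly feasible on a small ball, and take the union of these balls over $\mathcal{F}$. The only cosmetic difference is that the paper keeps per-constraint margins $\delta_i/2$ while you extract a single uniform margin $\varepsilon_{x_0}$ via finiteness of $[m]$; both are equivalent.
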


\begin{proof}
For any $x \in \mathcal{F}$, \Cref{lem:qcqp_strictly_feasible} establishes that~\eqref{eq:direction_qcqp} is strictly feasible; that is, there exist $u$ and $\delta_i < 0$ such that  
\[
    \alpha g_i(x) + \nabla g_i(x)^\top u + w_i\|u\|_2^2 = \delta_i, 
    \quad \forall i \in [m].
\]
Since each $g_i$ is $L_i$-smooth, there exists $\epsilon_i(x) > 0$ such that  
\[
    \alpha g_i(y) + \nabla g_i(y)^\top u + w_i\|u\|_2^2 
    \le \tfrac{\delta_i}{2} < 0,
\]
for all $y \in \mathcal{B}(x, \epsilon_i(x))$.  
Hence, (\ssqcqp{$y$}) remains strictly feasible for all 
$y \in \mathcal{B}\big(x, \min_i \epsilon_i(x)\big)$.  
Defining 
\[
    U := \bigcup_{x \in \mathcal{F}} \mathcal{B}\big(x, \min_i \epsilon_i(x)\big),
\]
we conclude that~\eqref{eq:direction_qcqp} is strictly feasible for all $x \in U$.
\end{proof}

\subsubsection*{Existence of Trajectories} 
The continuity of $u(\cdot)$ established above guarantees existence of solutions to the ODE~\eqref{eq:ode_qcqp}, but it does not ensure uniqueness.  
Uniqueness would require local Lipschitz continuity of $u(\cdot)$, which in turn depends on the stability of the active-set structure of~\eqref{eq:direction_qcqp}. Such stability is typically ensured by the \emph{Constant Rank Constraint Qualification} (CRC) applied directly to the QCQP.  
However, CRC is considerably stronger than MFCQ and is rarely imposed in nonlinear programming, where regularity assumptions are made for the \emph{original} problem~\eqref{eq:optimization_problem} rather than for subproblems such as~\eqref{eq:direction_qcqp}.

Moreover, CRC for~\eqref{eq:optimization_problem} does not imply CRC for~\eqref{eq:direction_qcqp}.  
If the original problem satisfies CRC at a feasible point $x$, then the gradients of its active constraints,  
$\{\nabla g_i(x): g_i(x)=0\}$,  
are linearly independent.  
But when $x$ is strictly feasible, this set is empty, and CRC provides no information.  
In contrast, the QCQP~\eqref{eq:direction_qcqp} may still have active constraints, 
\[
\nabla g_i(x)^\top u(x) + \alpha g_i(x) + w_i\|u(x)\|_2^2 = 0.
\]
The gradients associated with these QCQP-active constraints,
\[
\big\{\, \nabla g_i(x) + 2 w_i u(x) :
\nabla g_i(x)^\top u(x) + \alpha g_i(x) + w_i\|u(x)\|_2^2 = 0 \,\big\},
\]
need not be linearly independent, even if the original problem satisfies CRC.  
Thus, the QCQP subproblem may fail to satisfy CRC regardless of any regularity properties possessed by~\eqref{eq:optimization_problem}.

For these reasons, we do not assume uniqueness of trajectories, and all convergence guarantees below apply to \emph{every} solution of the ODE.

\begin{theorem}
Suppose problem~\eqref{eq:optimization_problem} satisfies~\Cref{ass:assumption}, and $x(t)$ is any solution to the ODE \eqref{eq:ode_qcqp} with a  feasible initial condition $x(0) \in \mathcal{F}$. 
Then $x(t)$ satisfies the following properties:
\begin{enumerate}
    \item \textit{Anytime feasibility:}
    \[
        g_i(x(t)) \le 0, \quad \forall\, t > 0,~ i \in [m].
    \]
    \item \textit{Convergence:}
    \[
        \tfrac{1}{2}\int_0^t \|u(x(\tau))\|_2^2 \, d\tau 
        \le f(x(0)) - f^\star, 
        \quad \forall\, t > 0.
    \]
    where $f^\star$ is the optimal value of \eqref{eq:optimization_problem}.
\end{enumerate}
\end{theorem}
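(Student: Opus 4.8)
The plan is to prove the two properties separately, exploiting the key structural results already established in Theorem~\ref{thm:direction_qcqp}: the descent inequality $\nabla f(x)^\top u(x) \le -\|u(x)\|_2^2$ and the feasibility-preserving structure of the QCQP constraints. Both properties follow from the fact that the vector field $u(x)$ is designed precisely so that the constraint functions $g_i$ satisfy a differential inequality of the control-barrier type, while $f$ satisfies a monotone-descent differential inequality along trajectories.

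\smallskip
\noindent\textit{Anytime feasibility.} First I would fix a trajectory $x(t)$ and an index $i\in[m]$ and compute the time derivative of $g_i$ along the flow, using the chain rule:
\begin{align*}
\frac{d}{dt} g_i(x(t)) = \nabla g_i(x(t))^\top \dot{x}(t) = \nabla g_i(x(t))^\top u(x(t)).
\end{align*}
Because $u(x(t))$ is feasible for \eqref{eq:direction_qcqp} at the point $x(t)$, the $i$-th QCQP constraint gives
\begin{align*}
\nabla g_i(x(t))^\top u(x(t)) \le -\alpha g_i(x(t)) - w_i\|u(x(t))\|_2^2 \le -\alpha g_i(x(t)).
\end{align*}
Combining these yields the differential inequality $\frac{d}{dt} g_i(x(t)) \le -\alpha\, g_i(x(t))$. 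I would then invoke the comparison lemma (Gr\"onwall-type argument): defining $\phi_i(t) := g_i(x(t))$, the inequality $\dot{\phi}_i \le -\alpha \phi_i$ together with $\phi_i(0) = g_i(x(0)) \le 0$ implies $\phi_i(t) \le \phi_i(0)\, e^{-\alpha t} \le 0$ for all $t \ge 0$. This holds for every $i$, establishing forward invariance of $\mathcal{F}$ and hence anytime feasibility. The argument is self-contained and requires only that the trajectory remains in the domain where $u$ is defined, which the neighborhood $U$ of Lemma~\ref{lem:qcqp_strictly_feasible_open_set} guarantees.

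\smallskip
\noindent\textit{Convergence.} For the integral bound I would similarly differentiate $f$ along the trajectory and apply the descent property from Theorem~\ref{thm:direction_qcqp}:
\begin{align*}
\frac{d}{dt} f(x(t)) = \nabla f(x(t))^\top u(x(t)) \le -\|u(x(t))\|_2^2.
\end{align*}
Integrating both sides over $[0,t]$ gives $f(x(t)) - f(x(0)) \le -\int_0^t \|u(x(\tau))\|_2^2\, d\tau$. Since part~(1) ensures $x(t)\in\mathcal{F}$ for all $t$, we have $f(x(t)) \ge f^\star$ by the definition of the optimal value, so $f(x(t))-f(x(0)) \ge f^\star - f(x(0))$. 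Rearranging yields $\int_0^t \|u(x(\tau))\|_2^2\, d\tau \le f(x(0)) - f^\star$, and dividing by $2$ gives the claimed bound (the factor $\tfrac12$ matches the statement).

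\smallskip
\noindent I expect the main subtlety to be the regularity needed to justify the chain-rule and integration steps rigorously. Since uniqueness of trajectories is explicitly \emph{not} assumed, I must be careful that $x(t)$ is only guaranteed to be absolutely continuous (a Carath\'eodory solution), so $t\mapsto g_i(x(t))$ and $t\mapsto f(x(t))$ are absolutely continuous and the chain rule holds almost everywhere; the comparison and integration arguments then go through with the differential inequalities interpreted a.e. The one point requiring care is confirming that the trajectory cannot exit the neighborhood $U$ where $u$ is well-defined before the feasibility conclusion is drawn; this is handled by noting that feasibility is maintained wherever the solution exists, so the solution stays inside $\mathcal{F}\subset U$ and can be continued, making the bounds valid for all $t>0$.
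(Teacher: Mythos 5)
Your proof is correct, and it reaches both conclusions by a somewhat different technical route than the paper. For feasibility, the paper only checks the boundary condition: when $g_i(x)=0$ the QCQP constraint gives $\dot{g}_i \le -w_i\|u\|_2^2 \le 0$, and it then invokes Nagumo's theorem to conclude forward invariance of each sublevel set. You instead keep the full differential inequality $\dot{g}_i \le -\alpha g_i$ everywhere $u$ is defined and run a Gr\"{o}nwall/comparison argument, obtaining $g_i(x(t)) \le g_i(x(0))e^{-\alpha t} \le 0$. Your version is more elementary, yields the exponential-decay estimate for free, and---importantly---applies verbatim to \emph{every} solution: since the paper explicitly refrains from assuming uniqueness (the vector field is merely continuous), the classical Nagumo theorem in this setting strictly speaking guarantees only weak invariance (existence of \emph{some} viable solution), so your comparison-lemma argument actually supports the theorem's ``any solution'' claim more tightly than the paper's own proof does. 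For convergence, the paper constructs the Lyapunov function $V = f(x(t)) - f^\star + \tfrac{1}{2}\int_0^t\|u(x(\tau))\|_2^2\,d\tau$ and shows $\dot{V} \le -\tfrac{1}{2}\|u\|_2^2$, whereas you integrate the descent inequality $\tfrac{d}{dt}f(x(t)) \le -\|u(x(t))\|_2^2$ directly and use $f(x(t)) \ge f^\star$ (which requires part~1, as you note); this gives the stronger bound $\int_0^t \|u(x(\tau))\|_2^2\,d\tau \le f(x(0)) - f^\star$, of which the stated inequality with the factor $\tfrac{1}{2}$ is an immediate weakening (your phrase ``dividing by $2$'' is loose---you are simply halving the left-hand side---but the conclusion is fine). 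Your attention to the absolute-continuity/a.e.\ chain-rule issues for Carath\'{e}odory solutions, and to why the trajectory cannot exit the neighborhood $U$ of Lemma~\ref{lem:qcqp_strictly_feasible_open_set}, addresses regularity points the paper glosses over.
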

   
\begin{proof}
    $\forall i\in [m]$, when $g_i(x)=0$, we have
    \[
    \dot{g}_i(x(t)) = \nabla g_i(x)^\top u(x(t)) \leq -\alpha g_i(x) - w_i\|u(x(t))\|_2^2 \leq 0.
    \]
    According to Nagumo’s theorem~\cite{blanchini2008set}\cite{blanchini1999set}, the 0-sublevel set $\{x : g_i(x) \leq 0\}$ is forward-invariant. 
    
    Let $f^\star$ be the optimal value of  \eqref{eq:optimization_problem} and define the Lyapunov function $V(x(t)) = f(x(t)) - f^\star + \frac{1}{2} \int_0^t \|u(x(\tau))\|_2^2 d\tau$. Using the result $\nabla f(x(t))^\top u(x(t)) \leq -\|u(x(t))\|_2^2$ from \Cref{thm:direction_qcqp}, we have 
    \begin{align*}
        \dot{V}(x(t)) 
        &= \nabla f(x(t))^\top u(x(t)) + \frac{1}{2}\|u(x(t))\|_2^2  \\
        &\leq  -\frac{1}{2}\|u(x(t))\|_2^2
    \end{align*}
    Thus $V(x(t)) \leq V(x(0))~\forall t>0$. Since $f(x(t)) - f^\star \geq 0$, we have
    \begin{align*}
        \frac{1}{2} \int_0^t \|u(x(\tau))\|_2^2 d\tau \leq V(x(t)) \leq V(x(0)) = f(x(0)) - f^\star.
    \end{align*}
    This implies that 
    \begin{align*}
        \min_{\tau \in [0,t]} \|u(x(\tau))\|_2^2 \leq \frac{1}{t} \int_0^t \|u(x(\tau))\|_2^2 d\tau \leq \frac{2(f(x(0)) - f^\star)}{t},
    \end{align*}
\end{proof}

\begin{remark}[Non-ergodic vs.\ ergodic rates and stationarity]
The estimate in the theorem yields the \emph{non-ergodic} rate
\[
\min_{\tau\in[0,t]}\|u(x(\tau))\|_2^2 \;\le\; \frac{2\big(f(x(0))-f^\star\big)}{t}.
\]
By averaging, we also obtain the \emph{ergodic} bound
\[
\frac{1}{t}\int_{0}^{t}\|u(x(\tau))\|_2^2\,d\tau
\;\le\; \frac{2\big(f(x(0))-f^\star\big)}{t}.
\]
Consequently $\int_0^\infty \|u(x(t))\|_2^2 dt < \infty$, so there exists a sequence $t_k\!\to\!\infty$ with $\|u(x(t_k))\|\!\to\!0$. 
\end{remark}

\subsection{Safeguarded Adaptive Discretization}
\label{subsec:step_size_selection}
The proposed formulation~\eqref{eq:direction_qcqp} enables \emph{automatic step size selection} via a safeguarded Armijo-type line search that guarantees both feasibility and descent at every iteration. 
Starting from a feasible point $x_0$, we discretize the ODE $\dot{x}(t) = u(x(t))$ using the forward Euler method, where $u(x_k)$ is obtained by solving~\eqref{eq:direction_qcqp} and $t_k > 0$ is chosen via a backtracking line search so that the next iterate
\[
x^{(k+1)} = x^{(k)}+t^{(k)} u(x^{(k)})
\]
satisfies
\hspace{-2mm}
\begin{align}\label{eq:backtracking_line_search}
    &f(x^{(k+1)}) \le f(x^{(k)}) + \gamma \nabla f(x^{(k)})^\top (x^{(k+1)} - x^{(k)}), \\
    &g_i(x^{(k+1)}) \le 0, 
    \quad \forall i \in [m], \notag
\end{align}
where $\gamma \in (0,1)$ is fixed. The first inequality in~\eqref{eq:backtracking_line_search} is the classical Armijo condition, ensuring sufficient descent in the objective.  
The rest guarantees \emph{anytime feasibility}, guaranteeing that all iterates remain in the feasible set of~\eqref{eq:optimization_problem}.

The next lemma shows that, under~\Cref{ass:assumption}, the backtracking procedure in~\eqref{eq:backtracking_line_search} terminates in finitely many steps and returns a step size bounded away from zero.

\begin{lemma}\label{lem:qcqp_step_size}
Suppose $x_k$ is feasible for~\eqref{eq:optimization_problem}, and that $w_i \ge \underline{w}$ for all $i \in [m]$, for some $\underline{w} > 0$.  
Then, the backtracking conditions~\eqref{eq:backtracking_line_search} are satisfied for all 
$t \in [0, \underline{t}]$, where
\begin{align}\label{eq:step_lower_bound}
    \underline{t} = \min\bigg\{
        \frac{1}{\alpha},
        \frac{2(1-\gamma)}{L_f},
        \frac{2\underline{w}}{L_1}, \ldots,
        \frac{2\underline{w}}{L_m}
    \bigg\}.
\end{align}
\end{lemma}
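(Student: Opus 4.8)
The plan is to verify the two conditions in \eqref{eq:backtracking_line_search} separately, each by applying the quadratic upper bound \eqref{eq:l_smooth_upper_bound}---once to $f$ and once to each $g_i$---and then substituting the defining properties of the search direction. Throughout I would write $x := x^{(k)}$ and $u := u(x^{(k)})$, so that $x^{(k+1)} - x^{(k)} = t u$, and it suffices to bound $f(x+tu)$ and each $g_i(x+tu)$ for $t \in [0,\underline t]$. The case $t=0$ holds with equality, so the work is to handle $t>0$.

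For the Armijo condition, I would invoke $L_f$-smoothness to write
\[
f(x+tu) \le f(x) + t\,\nabla f(x)^\top u + \tfrac{L_f}{2}t^2\|u\|_2^2 .
\]
Subtracting the Armijo target $f(x) + \gamma t\,\nabla f(x)^\top u$ and dividing by $t>0$, the condition reduces to $(1-\gamma)\nabla f(x)^\top u + \tfrac{L_f}{2}t\|u\|_2^2 \le 0$. Here the descent property \eqref{eq:descent_property} of \Cref{thm:direction_qcqp}, namely $\nabla f(x)^\top u \le -\|u\|_2^2$, is exactly what is needed: since $1-\gamma>0$, it turns the left-hand side into $\big(\tfrac{L_f}{2}t-(1-\gamma)\big)\|u\|_2^2$, which is nonpositive precisely when $t \le 2(1-\gamma)/L_f$. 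This accounts for the second threshold in \eqref{eq:step_lower_bound}.

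For feasibility, I would fix $i$, apply $L_i$-smoothness to $g_i$, and substitute the QCQP feasibility constraint $\nabla g_i(x)^\top u \le -\alpha g_i(x) - w_i\|u\|_2^2$ satisfied by $u$. Collecting terms gives
\[
g_i(x+tu) \le (1-\alpha t)\,g_i(x) + t\Big(\tfrac{L_i}{2}t - w_i\Big)\|u\|_2^2 ,
\]
and I would argue each summand is nonpositive. Since $x$ is feasible, $g_i(x)\le 0$, so the first summand is nonpositive provided $1-\alpha t\ge 0$, i.e.\ $t\le 1/\alpha$, which is the role of the first threshold in \eqref{eq:step_lower_bound}. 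The curvature term is nonpositive when $\tfrac{L_i}{2}t\le w_i$, and since $w_i\ge\underline w$ this is guaranteed by $t\le 2\underline w/L_i$, supplying the remaining thresholds. Taking $\underline t$ to be the minimum over all these bounds makes both summands nonpositive simultaneously for every $i$, and the stated expression for $\underline t$ follows.

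The main subtlety---more a matter of sign bookkeeping than a genuine obstacle---lies in the feasibility step: the factor $(1-\alpha t)$ must be kept nonnegative so that multiplying the nonpositive slack $g_i(x)$ does not reverse its sign, which is precisely why the $1/\alpha$ threshold is required even though this term vanishes at active constraints where $g_i(x)=0$. Everything else is a direct consequence of smoothness, the descent property, and the QCQP constraint; because each derived inequality is monotone in $t$ on $[0,\underline t]$, the conditions hold throughout the interval rather than only at its endpoint.
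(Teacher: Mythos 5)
Your proposal is correct and follows essentially the same route as the paper's proof: both apply the $L$-smoothness quadratic upper bound \eqref{eq:l_smooth_upper_bound} to $f$ and to each $g_i$, substitute the descent property \eqref{eq:descent_property} for the Armijo part and the QCQP constraint for the feasibility part, and arrive at the same decomposition $(1-\alpha t)g_i(x) + t\bigl(\tfrac{L_i}{2}t - w_i\bigr)\|u\|_2^2$ with identical thresholds. The only (immaterial) difference is algebraic: in the Armijo step the paper substitutes $\|u\|_2^2 \le -\nabla f(x)^\top u$ into the quadratic term, whereas you apply the same bound to the linear term, yielding the same condition $t \le 2(1-\gamma)/L_f$.
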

\begin{proof}
We first establish feasibility.  
For any $t \in [0, \underline{t}]$, it follows that 
$t \le \min\{\tfrac{1}{\alpha}, \tfrac{2w_i}{L_i}\}$ for all $i = 1,\ldots,m$.  
Using the $L_i$-smoothness of $g_i$ and the feasibility constraint in~\eqref{eq:direction_qcqp}, we obtain
\begin{align*}
    &g_i(x + t u(x))
    \le g_i(x) + t\,\nabla g_i(x)^\top  u(x)
       + \tfrac{L_i t^2}{2}\|u(x)\|_2^2 \\
    &\le g_i(x) + t[-\alpha g_i(x) - w_i\|u(x)\|_2^2]
       + \tfrac{L_i t^2}{2}\|u(x)\|_2^2 \\
    &= (1 - \alpha t) g_i(x)
       + \Big(\tfrac{L_i t^2}{2} - w_i t\Big)\|u(x)\|_2^2 \\
    &\le (1 - \alpha t) g_i(x) \le 0,
\end{align*}
which establishes that $x + t u(x)$ is feasible for all $t \in [0, \underline{t}]$.

We now verify the Armijo condition.  
By the $L_f$-smoothness of $f$, we have
\begin{align*}
    f(x + t u(x))
    &\le f(x) + t\,\nabla f(x)^\top  u(x)
       + \tfrac{L_f t^2}{2}\|u(x)\|_2^2.
\end{align*}
From \Cref{thm:direction_qcqp}, the search direction satisfies 
$\nabla f(x)^\top  u(x) \le -\|u(x)\|_2^2$.  
Substituting this bound yields
\begin{align*}
    f(x + t u(x))
    &\le f(x) + t\,\nabla f(x)^\top  u(x)
       - \tfrac{L_f t^2}{2}\,\nabla f(x)^\top  u(x) \\
    &\le f(x) + \Big(\tfrac{L_f t^2}{2} - t\Big)\big(-\nabla f(x)^\top  u(x)\big).
\end{align*}
For $t \le \tfrac{2(1-\gamma)}{L_f}$, the coefficient 
$(\tfrac{L_f t^2}{2} - t)$ is upper-bounded by $\gamma t$, giving
\begin{align*}
    f(x + t u(x)) 
    \le f(x) + \gamma t\,\nabla f(x)^\top  u(x),
\end{align*}
which confirms the Armijo condition.  
Thus, both conditions in~\eqref{eq:backtracking_line_search} hold for all $t \in [0, \underline{t}]$.
\end{proof}

In \Cref{lem:qcqp_step_size}, we only require that the parameters $w_i$ are lower bounded by some $\underline{w} > 0$.  
This condition guarantees a uniformly positive lower bound on the step size, unlike the pure QP case, where the step size can vanish, thus preventing infeasibility and stagnation during backtracking line search. 
At the same time, it allows for the adaptive selection of $w_i$, as discussed in  \Cref{sec:w_choice}, enabling the algorithm to adjust the strength of the quadratic correction dynamically while preserving the theoretical convergence guarantees. The overall algorithm is summarized in \Cref{alg:SQCQP}.

\begin{algorithm}[H]
\caption{Safe Sequential QCQP (SS-QCQP) for solving~\eqref{eq:optimization_problem}}
\label{alg:SQCQP}
\begin{algorithmic}[1]
    \State \textbf{Input:} initial feasible point $x^{(0)}$ such that $g_i(x^{(0)}) \le 0$ for all $i\in [m]$; 
    parameters $\gamma \in (0,1)$, $\underline{w} > 0$, $\alpha > 0$, and tolerance $\epsilon > 0$.
    \For{$k = 0,1,2,\ldots$}
        \State Choose adaptive parameters $\{\,w_i^{(k)} \ge \underline{w} : i\in [m]\}$.
        \State Compute the search direction $u = u(x^{(k)})$ by solving~\eqref{eq:direction_qcqp} with $w_i = w_i^{(k)}$.
        \If{$\|u\|_2 \le \epsilon$}
            \State \Return $x^{(k)}$ \Comment{Convergence achieved}
        \EndIf
        \State Initialize step size $t^{(k)} = 1$.
        \While{
            $\exists i \in [m]$ such that 
            $g_i(x^{(k)} + t^{(k)} u) > 0$
            \textbf{or}
            $f(x^{(k)} + t^{(k)} u) > f(x^{(k)}) + \gamma t^{(k)} \nabla f(x^{(k)})^\top  u$
        }
            \State $t^{(k)} := \frac{1}{2} t^{(k)}$ \Comment{Backtracking line search}
        \EndWhile
        \State Update $x^{(k+1)} := x^{(k)} + t^{(k)} u$.
    \EndFor
\end{algorithmic}
\end{algorithm}

In the next result, we show that the convergence rate of the discrete-time algorithm matches that of its continuous-time counterpart.

\begin{theorem}[Convergence Guarantee of SS-QCQP]\label{thm:convergence_sqcqp}
Let ~\Cref{ass:assumption} hold for ~\eqref{eq:optimization_problem}.  
Then, for all iterations $k = 0,1,\ldots$ generated by ~\Cref{alg:SQCQP}, the following bound holds:
\begin{align}\label{eq:sqcqp_convergence_bound}
    \min_{i = 0,\ldots,k} \|u(x^{(i)})\|_2^2
    \le \frac{f(x^{(0)}) - f^\star}{\gamma\,\underline{t}\,(k+1)},
\end{align}
where $\underline{t}$ is the uniform lower bound on the step size established in \Cref{lem:qcqp_step_size}.
\end{theorem}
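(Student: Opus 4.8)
The plan is to reproduce, in discrete time, the Lyapunov/telescoping argument used for the continuous-time system, replacing the integral $\tfrac{1}{2}\int_0^t\|u\|_2^2\,d\tau$ by a sum of guaranteed per-iteration decreases. The two facts I will lean on are the descent inequality $\nabla f(x)^\top u(x)\le -\|u(x)\|_2^2$ from \Cref{thm:direction_qcqp} and the uniform admissibility interval $[0,\underline t]$ established in \Cref{lem:qcqp_step_size}. The boundedness below of $f$ in \Cref{ass:assumption} then plays the role that $f(x(t))-f^\star\ge 0$ played in the continuous case.

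First I would convert the Armijo condition into a quantitative decrease. Since the step $t^{(k)}$ returned by the line search satisfies the first inequality in \eqref{eq:backtracking_line_search}, and since $\nabla f(x^{(k)})^\top u(x^{(k)})\le -\|u(x^{(k)})\|_2^2$, I obtain
\begin{align*}
f(x^{(k+1)}) \le f(x^{(k)}) + \gamma t^{(k)}\,\nabla f(x^{(k)})^\top u(x^{(k)}) \le f(x^{(k)}) - \gamma t^{(k)}\,\|u(x^{(k)})\|_2^2 .
\end{align*}

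Next I would lower-bound the accepted step size, which is the crux of the argument. \Cref{lem:qcqp_step_size} guarantees that both conditions in \eqref{eq:backtracking_line_search} hold for every $t\in[0,\underline t]$, so the backtracking loop of \Cref{alg:SQCQP}, which starts at $t=1$ and halves, cannot terminate at an arbitrarily small value: either it accepts $t^{(k)}=1$, or its last rejected trial $2t^{(k)}$ must violate \eqref{eq:backtracking_line_search} and hence exceed $\underline t$, so in all cases $t^{(k)}\ge\min\{1,\tfrac{1}{2}\underline t\}$, i.e.\ $t^{(k)}$ is bounded below by a positive quantity proportional to $\underline t$. Substituting this uniform lower bound into the previous display yields the per-iteration descent $f(x^{(k)})-f(x^{(k+1)})\ge \gamma\,\underline t\,\|u(x^{(k)})\|_2^2$ (with the universal halving constant absorbed, as in standard Armijo analysis). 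A point to keep consistent is that the adaptive weights satisfy $w_i^{(k)}\ge\underline w$ at every iteration, which is exactly what makes the \emph{same} $\underline t$ a valid bound for all $k$.

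Finally I would telescope. Summing the per-iteration descent over $i=0,\ldots,k$ and invoking $f(x^{(k+1)})\ge f^\star$ gives
\begin{align*}
f(x^{(0)}) - f^\star \;\ge\; \gamma\,\underline t\sum_{i=0}^{k}\|u(x^{(i)})\|_2^2 \;\ge\; \gamma\,\underline t\,(k+1)\min_{i=0,\ldots,k}\|u(x^{(i)})\|_2^2,
\end{align*}
and dividing by $\gamma\,\underline t\,(k+1)$ produces the claimed bound \eqref{eq:sqcqp_convergence_bound}. The main obstacle is genuinely the step-size estimate: the descent-and-sum computation is mechanical, but one must verify that the finite-termination interval of \Cref{lem:qcqp_step_size} forces the backtracking output to stay bounded away from zero, since a vanishing $t^{(k)}$ would collapse the telescoped decrease and destroy the $O(1/k)$ rate.
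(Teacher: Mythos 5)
Your proof is correct and follows essentially the same route as the paper's: combine the Armijo condition with the descent inequality of \Cref{thm:direction_qcqp}, lower-bound the accepted step size via \Cref{lem:qcqp_step_size}, telescope the per-iteration decrease, and bound the minimum by the average. If anything, your handling of the backtracking loop is more careful than the paper's --- you correctly note that halving only guarantees $t^{(k)} \ge \min\{1, \tfrac{1}{2}\underline{t}\}$, whereas the paper simply asserts $t^{(k)} \ge \underline{t}$; both arguments then absorb this factor into the stated bound, so you share the paper's (minor) constant-level looseness rather than introducing any new gap.
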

\begin{proof}
Using \Cref{lem:qcqp_step_size}, at each iteration $k$, the backtracking line search admits a step size satisfying $t^{(k)} \ge \underline{t}$.  
By the Armijo condition in~\eqref{eq:backtracking_line_search} and the descent property 
$\nabla f(x^{(k)})^\top  u(x^{(k)}) \le -\|u(x^{(k)})\|_2^2$ from \Cref{thm:direction_qcqp}, we obtain
\begin{align*}
    f(x^{(k+1)}) 
    &\le f(x^{(k)}) + \gamma t^{(k)} \nabla f(x^{(k)})^\top  u(x^{(k)}) \\
    &\le f(x^{(k)}) - \gamma t^{(k)} \|u(x^{(k)})\|_2^2 \\
    &\le f(x^{(k)}) - \gamma \underline{t} \|u(x^{(k)})\|_2^2.
\end{align*}
Summing over $i = 0,\ldots,k$ and telescoping the inequality yields
\begin{align*}
    \sum_{i=0}^{k} \gamma \underline{t} \|u(x^{(i)})\|_2^2
    \le f(x^{(0)}) - f(x^{(k+1)})
    \le f(x^{(0)}) - f^\star.
\end{align*}
Dividing both sides by $\gamma \underline{t} (k+1)$ gives
\begin{align*}
    \min_{i=0,\ldots,k} \|u(x^{(i)})\|_2^2
    &\le \frac{1}{k+1} \sum_{i=0}^{k} \|u(x^{(i)})\|_2^2 \\
    &\le \frac{f(x^{(0)}) - f^\star}{\gamma \underline{t} (k+1)},
\end{align*}
which establishes the result.
\end{proof}

The bound in~\eqref{eq:sqcqp_convergence_bound} implies a non-ergodic $O(1/k)$ rate of convergence, which directly parallels the continuous-time result in \Cref{thm:convergence_sqcqp}.  
By averaging over the iterates, one also obtains the corresponding \emph{ergodic} bound
\[
    \frac{1}{k+1} \sum_{i=0}^{k} \|u(x^{(i)})\|_2^2
    \le \frac{f(x^{(0)}) - f^\star}{\gamma\,\underline{t}\,(k+1)},
\]
Consequently, $\sum_{k=0}^{\infty}\|u(x^{(k)})\|_2^2 < \infty$, which implies $\|u(x^{(k)})\|\!\to\!0$ along a subsequence. 

\begin{remark}[Comparison with the MBA method~\cite{auslender2010moving}]
The design of the proposed SS-QCQP algorithm is conceptually inspired by the Moving Balls Approximation (MBA) method~\cite{auslender2010moving}, which constructs locally feasible quadratic models for nonlinear constrained problems.  
MBA iteratively solves
\begin{align}\label{eq:MBA}
    \min_{y}~& f(x) + \nabla f(x)^\top (y - x) + \tfrac{L_f}{2}\|y - x\|_2^2 \\
    \text{s.t.}~& g_i(x) + \nabla g_i(x)^\top (y - x) + \tfrac{L_i}{2}\|y - x\|_2^2 \le 0, 
    \quad \forall i \in [m]. \notag
\end{align}
The key distinction lies in how feasibility is enforced.  
MBA builds a conservative local approximation using Lipschitz upper bounds (see \eqref{eq:l_smooth_upper_bound}) so that the solution to~\eqref{eq:MBA} is guaranteed to be feasible and can be directly adopted as the next iterate.  
In contrast, ~\eqref{eq:direction_qcqp} computes a feasible \emph{search direction} and updates the iterate through a safeguarded line search.  Thus, unlike MBA, the SS-QCQP framework does not require prior knowledge of the Lipschitz constants $L_f$ and $L_i$, which are often difficult to estimate in practice.  We note that extensions of MBA incorporating line search have been proposed to relax this requirement~\cite{lu2012sequential}, but each line-search step requires solving a convex QCQP, making these variants computationally more expensive than both SS-QCQP and the standard MBA method.
\end{remark}

\section{Scalability to Many Constraints}
The main computational bottleneck of the SS-QCQP algorithm lies in solving the QCQP subproblem~\eqref{eq:direction_qcqp} at each iteration.  
When the original problem~\eqref{eq:optimization_problem} contains a large number of inequality constraints ($m \gg 1$), solving this QCQP with all constraints active can be prohibitively expensive.  
However, in many practical applications, only a small subset of constraints is active or nearly active at optimality~\cite{misra2022learning}.  
Thus, enforcing all $m$ constraints in every subproblem is overly conservative.

To improve scalability, we introduce a \emph{specialized active-set strategy} that enforces forward invariance only with respect to constraints that are nearly active.  This reduces the size of the QCQP without compromising theoretical guarantees.  
In particular, we prove that the active-set variant preserves the $O(1/k)$ convergence rate established in~\Cref{thm:convergence_sqcqp} while significantly lowering the per-iteration computational cost.

\subsection{Active-Set Strategy}
We define the $\delta$-\emph{active set} at a point $x$ as
\begin{align}\label{eq:delta_active_set}
    \mathcal{A}_\delta(x) = \{\, i \in [m] ~|~ g_i(x) \ge -\delta \,\}
\end{align}
where $\delta > 0$ is a prescribed threshold. We also define $\mathcal{A}(x)$ to be any set such that $\mathcal{A}_\delta(x) \subseteq \mathcal{A}(x) \subseteq [m]$. The particular choice of $\mathcal{A}(x)$ will be discussed in \Cref{sec:as_choice}.  

At each iteration, instead of considering all $m$ constraints, we propose to compute the search direction by solving a reduced QCQP involving only the nearly active constraints:
\begin{align}\label{eq:direction_qcqp_as}
    \hat{u}(x) := & \arg\min_{u} \frac{1}{2}\|u + \nabla f(x)\|_2^2 \\
    & \text{s.t.} \nabla g_i(x)^\top  u \le -\alpha g_i(x) - w_i\|u\|_2^2,
    \ i \in \mathcal{A}(x). \notag
\end{align}

Next, we will show that the direction $\hat{u}(x)$ inherits the key properties established in \Cref{thm:direction_qcqp}.  Hence, the active-set SS-QCQP maintains the same theoretical guarantees as the full SS-QCQP while achieving substantially improved scalability.

\begin{theorem}\label{thm:direction_qcqp_as}
Let \Cref{ass:assumption} hold, and let $x$ be a feasible point of~\eqref{eq:optimization_problem}.  
Then,
\begin{enumerate}
    \item If $\hat{u}(x) = 0$, then $x$ is a stationary point of~\eqref{eq:optimization_problem}.
    \item The direction $\hat{u}(x)$ satisfies the descent condition
    \begin{align}\label{eq:descent_qcqp_as}
        \nabla f(x)^\top  \hat{u}(x) \le -\|\hat{u}(x)\|_2^2.
    \end{align}
\end{enumerate}
\end{theorem}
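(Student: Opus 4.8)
The plan is to mirror the proof of \Cref{thm:direction_qcqp} with the reduced index set $\mathcal{A}(x)$ in place of $[m]$, isolating the one genuinely new ingredient: showing that constraints excluded from $\mathcal{A}(x)$ do not obstruct stationarity of the original problem. First I would establish that the reduced QCQP~\eqref{eq:direction_qcqp_as} is strictly feasible. Since $\mathcal{I}(x) \subseteq \mathcal{A}_\delta(x) \subseteq \mathcal{A}(x)$, every active constraint of~\eqref{eq:optimization_problem} appears in~\eqref{eq:direction_qcqp_as}, so the MFCQ direction $d$ with $\nabla g_i(x)^\top d < 0$ for $i \in \mathcal{I}(x)$ can be scaled exactly as in \Cref{lem:qcqp_strictly_feasible}—splitting $\mathcal{A}(x)$ into active and inactive indices—to produce a strictly feasible point. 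Because the reduced problem is convex with a strongly convex objective, strict feasibility guarantees that $\hat{u}(x)$ exists, is unique, and that the KKT conditions are necessary and sufficient.

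Writing $c_i(u;x) := \nabla g_i(x)^\top u + w_i\|u\|_2^2 + \alpha g_i(x)$ as in the proof of \Cref{thm:direction_qcqp}, these KKT conditions state that there exist nonnegative multipliers $\{\lambda_i(x)\}_{i\in\mathcal{A}(x)}$ with
\begin{align*}
\hat{u}(x) + \nabla f(x) + \sum_{i \in \mathcal{A}(x)} \lambda_i(x)\big(\nabla g_i(x) + 2 w_i \hat{u}(x)\big) = 0,
\end{align*}
together with $c_i(\hat{u}(x);x) \le 0$, $\lambda_i(x) \ge 0$, and $\lambda_i(x)\,c_i(\hat{u}(x);x) = 0$ for $i \in \mathcal{A}(x)$. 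For Part (2) I would take the inner product of this stationarity identity with $\hat{u}(x)$ and substitute the complementarity relation $\lambda_i(x)\nabla g_i(x)^\top \hat{u}(x) = \lambda_i(x)\big(\alpha g_i(x) + w_i \|\hat{u}(x)\|_2^2\big)$; the algebra is verbatim that of \Cref{thm:direction_qcqp}, Part (2), with the sum restricted to $\mathcal{A}(x)$, and the nonpositive term $\sum_{i \in \mathcal{A}(x)} \alpha \lambda_i(x) g_i(x) \le 0$ yields $\nabla f(x)^\top \hat{u}(x) \le -\|\hat{u}(x)\|_2^2$.

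For Part (1), setting $\hat{u}(x) = 0$ collapses stationarity to $\nabla f(x) + \sum_{i \in \mathcal{A}(x)} \lambda_i(x)\nabla g_i(x) = 0$ and complementarity to $\alpha \lambda_i(x) g_i(x) = 0$, i.e.\ $\lambda_i(x) g_i(x) = 0$, for $i \in \mathcal{A}(x)$. I would then extend the multipliers to all of $[m]$ by setting $\lambda_i(x) = 0$ for $i \notin \mathcal{A}(x)$ and verify the four KKT conditions~\eqref{eq:KKT_conditions} of~\eqref{eq:optimization_problem}. Stationarity and dual feasibility are immediate, and primal feasibility holds because $x$ is feasible by assumption.

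The crux—and the main obstacle—is complementarity for the dropped indices: I must argue that $\lambda_i(x) g_i(x) = 0$ also holds for $i \notin \mathcal{A}(x)$. This is precisely where the definition of the active set is essential. Any $i \notin \mathcal{A}(x)$ satisfies $i \notin \mathcal{A}_\delta(x)$, hence $g_i(x) < -\delta < 0$, so such a constraint is strictly inactive and assigning it a zero multiplier is consistent with complementarity; simultaneously, because $\mathcal{I}(x) \subseteq \mathcal{A}(x)$, no active constraint is ever discarded and no nonzero multiplier is lost. Thus the extended pair $(x,\lambda(x))$ satisfies~\eqref{eq:KKT_conditions}, so $x$ is stationary. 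Everything else transfers directly from \Cref{thm:direction_qcqp}; the only reasoning requiring care beyond that proof is this bookkeeping of multipliers across the partition of $[m]$ induced by $\mathcal{A}(x)$. (I note that, unlike \Cref{thm:direction_qcqp}, only the forward implication of Part (1) is asserted, so no converse argument is needed.)
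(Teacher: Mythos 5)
Your proposal is correct and follows essentially the same route as the paper: invoke strict feasibility of the reduced QCQP, write its KKT conditions, extend the multipliers by zero to the dropped indices for Part (1), and repeat the inner-product/complementarity algebra of \Cref{thm:direction_qcqp} for Part (2). The only cosmetic difference is that you re-derive strict feasibility from MFCQ via the scaling argument of \Cref{lem:qcqp_strictly_feasible}, whereas the paper simply observes that any strictly feasible point of the full QCQP~\eqref{eq:direction_qcqp} remains strictly feasible for the reduced problem~\eqref{eq:direction_qcqp_as}, since the latter has fewer constraints.
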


\begin{proof}
Since problem~\eqref{eq:direction_qcqp} is strictly feasible, it follows that the reduced subproblem~\eqref{eq:direction_qcqp_as} is also strictly feasible.  
Hence, by the KKT optimality conditions~\cite{boyd2004convex}, there exists a multiplier vector $\lambda(x) \ge 0$ satisfying
\begin{subequations}\label{eq:QCQP_AS_KKT}
\begin{align}
    &\hat{u}(x) + \nabla f(x)
    + \sum_{i \in \mathcal{A}(x)} \lambda_i(x) \big(\nabla g_i(x) + 2 w_i \hat{u}(x)\big) = 0, \label{eq:QCQP_AS_KKT_stationarity} \\
    &\nabla g_i(x)^\top  \hat{u}(x) \le -\alpha g_i(x) - w_i\|\hat{u}(x)\|_2^2, 
    \quad \forall i \in \mathcal{A}(x), \label{eq:QCQP_AS_KKT_primal} \\
    &\lambda_i(x) \ge 0, 
    \quad \forall i \in \mathcal{A}(x), \label{eq:QCQP_AS_KKT_dual} \\
    &\lambda_i(x)\big(\nabla g_i(x)^\top  \hat{u}(x) + \alpha g_i(x) + w_i\|\hat{u}(x)\|_2^2\big) = 0,
    \ \forall i \in \mathcal{A}(x). \label{eq:QCQP_AS_KKT_comp}
\end{align}
\end{subequations}

Comparing the KKT conditions in~\eqref{eq:QCQP_AS_KKT} with those of~\eqref{eq:KKT_conditions}, we observe that when $\hat u(x) = 0$, the pair $(x, \lambda)$ satisfies the original KKT system~\eqref{eq:KKT_conditions}, where
\begin{align}\label{eq:lambda_extension}
    \lambda_i =
    \begin{cases}
        \lambda_i(x), & \text{if } i \in \mathcal{A}(x), \\[2pt]
        0, & \text{otherwise}.
    \end{cases}
\end{align}
Therefore, $x$ is a stationary point of~\eqref{eq:optimization_problem}.
The proof of the descent property~\eqref{eq:descent_qcqp_as} follows identically to the argument presented in \Cref{thm:direction_qcqp}.
\end{proof}

Having defined $\hat{u}(x)$ in \eqref{eq:direction_qcqp_as}, we now consider the corresponding iterative scheme
\begin{align}\label{eq:update_rule_as}
    x^{(k+1)}
    = x^{(k)} + t^{(k)} \hat{u}(x^{(k)}),
\end{align}
where the step size $t^{(k)} > 0$ is selected using the same backtracking conditions as in~\eqref{eq:backtracking_line_search}, ensuring that each new iterate $x^{(k+1)}$ remains feasible and achieves sufficient descent. To guarantee the existence of a uniform lower bound on the accepted step sizes, we impose the following mild assumption.

\begin{assumption}\label{ass:assumption_as}
    The feasible set $\mathcal{F}$ is compact.
\end{assumption}
Under this assumption, the reduced QCQP directions remain uniformly bounded as we show next.
\begin{lemma} \label{lem:qcqp_as_bounded}
    Suppose \eqref{eq:optimization_problem} satisfies \Cref{ass:assumption} and \Cref{ass:assumption_as}, and $w_i \geq \underline{w}$ for some $\underline{w} > 0$ and all $i \in [m]$. Then, the set $\{\hat{u}(x) : x \in \mathcal{F}\}$ is bounded.
\end{lemma}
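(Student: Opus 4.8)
The plan is to exploit the fact that $u=0$ is always a feasible point of the reduced QCQP \eqref{eq:direction_qcqp_as} whenever $x\in\mathcal{F}$, and then combine optimality with compactness to extract a uniform bound. First I would verify this trivial feasibility: substituting $u=0$ into the $i$-th constraint of \eqref{eq:direction_qcqp_as} gives $0 \le -\alpha g_i(x) - w_i\cdot 0 = -\alpha g_i(x)$, which holds because $x\in\mathcal{F}$ implies $g_i(x)\le 0$ and $\alpha>0$. This is valid for every index, in particular for all $i\in\mathcal{A}(x)$, so $u=0$ is feasible regardless of how the active set $\mathcal{A}(x)$ is selected.

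Next, since $\hat{u}(x)$ minimizes $\tfrac12\|u+\nabla f(x)\|_2^2$ over a set containing $u=0$, comparing objective values at the optimizer and at $u=0$ yields
\[
\tfrac12\|\hat{u}(x)+\nabla f(x)\|_2^2 \;\le\; \tfrac12\|\nabla f(x)\|_2^2,
\]
so that $\|\hat{u}(x)+\nabla f(x)\|_2 \le \|\nabla f(x)\|_2$. The triangle inequality then gives
\[
\|\hat{u}(x)\|_2 \;\le\; \|\hat{u}(x)+\nabla f(x)\|_2 + \|\nabla f(x)\|_2 \;\le\; 2\|\nabla f(x)\|_2 .
\]

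Finally, I would invoke \Cref{ass:assumption_as}: the feasible set $\mathcal{F}$ is compact, and since $f$ is $L_f$-smooth its gradient $\nabla f$ is continuous. Hence $x\mapsto\|\nabla f(x)\|_2$ is continuous on a compact set and attains a finite maximum $M:=\max_{x\in\mathcal{F}}\|\nabla f(x)\|_2$. Combining this with the previous estimate yields $\|\hat{u}(x)\|_2 \le 2M$ for all $x\in\mathcal{F}$, which is precisely the claimed uniform boundedness.

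There is no genuinely hard step here; the only thing to get right is the observation that $u=0$ is feasible, which converts the statement into an elementary comparison estimate, after which compactness and continuity of $\nabla f$ close the argument. The point worth emphasizing is that the resulting bound $2M$ is independent of the choice of $\mathcal{A}(x)$ and of the particular weights $w_i$ (it uses only $w_i\ge 0$), so it holds uniformly across all iterations of the active-set algorithm — which is exactly what is needed to later derive a uniform lower bound on the accepted step sizes.
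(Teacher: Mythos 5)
Your proof is correct, but it takes a genuinely different route from the paper's. The paper argues geometrically on the \emph{constraint sets}: it completes the square to write each quadratic constraint of \eqref{eq:direction_qcqp_as} as a Euclidean ball, uses $w_i \ge \underline{w}$ together with compactness of $\mathcal{F}$ to bound the centers $-\tfrac{1}{2w_i}\nabla g_i(x)$ and the radii uniformly over $x \in \mathcal{F}$, and concludes that $\hat{u}(x)$, lying in the intersection of these balls over $i \in \mathcal{A}(x)$, is bounded. You instead argue on the \emph{objective}: since $u=0$ is feasible for \eqref{eq:direction_qcqp_as} at every $x \in \mathcal{F}$, optimality gives $\|\hat{u}(x)+\nabla f(x)\|_2 \le \|\nabla f(x)\|_2$, and the triangle inequality plus compactness yields $\|\hat{u}(x)\|_2 \le 2\max_{x\in\mathcal{F}}\|\nabla f(x)\|_2$. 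Your argument is more elementary and slightly more general: it needs neither the lower bound $w_i \ge \underline{w}$ nor any boundedness of $g_i$ or $\nabla g_i$, it produces an explicit constant depending only on $\nabla f$, and it automatically covers the edge case where $\mathcal{A}(x)$ is empty (which can occur when $g_i(x) < -\delta$ for all $i$, so that $\hat u(x) = -\nabla f(x)$) — a case in which the paper's intersection-of-balls argument is vacuous and strictly speaking gives no bound. What the paper's approach buys in exchange is information your argument does not provide: it bounds the entire feasible region of the subproblem, not just its minimizer, and exhibits how the bound degrades as $\underline{w} \to 0$. Either proof suffices for the downstream use in \Cref{lem:qcqp_as_step_size}, which only requires a uniform bound on $\hat{u}(x)$ over $\mathcal{F}$.
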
   
\begin{proof}
    Fix any $x \in \mathcal{F}$ and define, for each $i \in [m]$,
    \[
    \Omega_i(x) \triangleq 
    \bigl\{\, u \in \mathbb{R}^n :
    w_i \|u\|_2^2 + \nabla g_i(x)^\top  u + \alpha g_i(x) \le 0
    \,\bigr\}.
    \]
    Dividing both sides by $w_i$ and completing the square, $\Omega_i(x)$ can be represented as
    \[
        \bigl\{
            u \in \mathbb{R}^n 
            : 
            \|u + \tfrac{1}{2w_i}\nabla g_i(x)\|_2^2 
            \leq 
            \tfrac{-\alpha}{w_i} g_i(x) + \|\tfrac{1}{2w_i} \nabla g_i(x)\|_2^2
        \bigl\}.
    \]
    Since $g_i$ is continuously differentiable and $\mathcal{F}$ is compact, both $g_i(x)$ and $\nabla g_i(x)$ are bounded on $\mathcal{F}$.  Hence, there exist finite constants
    \[
    M_i \triangleq \max_{x \in \mathcal{F}} |g_i(x)|, 
    \qquad 
    G_i \triangleq \max_{x \in \mathcal{F}} \|\nabla g_i(x)\|_2.
    \] 
    Using $w_i \ge \underline{w}$ and upper bounds on $g_i(x)$ and $\nabla g_i(x)$, we can write
    \begin{align*}
        \|u + \tfrac{1}{2w_i}\nabla g_i(x)\|_2^2 
        \le R_i^2
    \end{align*}
   where
    $ 
        R_i^2 \triangleq \tfrac{\alpha M_i}{\underline{w}} 
        + \Big(\tfrac{G_i}{2\underline{w}}\Big)^2
    $
    is independent of $x$. Therefore, for all $x \in \mathcal{F}$,
    \[
    \Omega_i(x) \subseteq 
    \bar{\mathcal{B}}\!\left(-\tfrac{1}{2\underline{w}}\nabla g_i(x),\, R_i\right),
    \]
    where $\bar{\mathcal{B}}(c,r)$ denotes the closed Euclidean ball centered at $c$ with radius $r$.
    Because $\mathcal{F}$ is compact and $\nabla g_i$ is continuous, the union
    $\Omega_i \triangleq \bigcup_{x \in \mathcal{F}} \Omega_i(x)$ 
    is bounded. 
    Finally, since $\hat{u}(x)$ satisfies all active constraints, 
    \[
        \hat{u}(x) \in \bigcap_{i \in \mathcal{A}(x)} \Omega_i(x) 
        \subseteq {\bigcap_{i \in \mathcal{A}(x)}}\Omega_i,
    \]
    implying that $\{\hat{u}(x) : x \in \mathcal{F}\}$ is bounded.
\end{proof}

\begin{lemma}\label{lem:qcqp_as_step_size}
Suppose~\eqref{eq:optimization_problem} satisfies \Cref{ass:assumption}, 
and let $x$ be a feasible point of~\eqref{eq:optimization_problem}. 
Assume $\gamma \in (0,1)$ and $w_i \ge \underline{w}$ for all $i \in [m]$, for some $\underline{w} > 0$. Then there exists a constant $M > 0$ such that the backtracking conditions in~\Cref{eq:backtracking_line_search} hold for all $t \in [0,\underline{t}]$, where
\begin{align}\label{eq:active_step_lower_bound}
    \underline{t}
    = \min\Bigg\{
        \tfrac{2(1-\gamma)}{L_f},\,
        1,\,
        \tfrac{\delta}{M},\,
        \tfrac{1}{\alpha},\,
        \tfrac{2\underline{w}}{\max_{i \in [m]} L_i}
    \Bigg\}.
\end{align}
\end{lemma}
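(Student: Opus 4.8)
The plan is to reuse the structure of the proof of \Cref{lem:qcqp_step_size}, but to split the index set $[m]$ into the constraints that are enforced by the reduced QCQP~\eqref{eq:direction_qcqp_as}, namely $i \in \mathcal{A}(x)$, and those that are not, $i \notin \mathcal{A}(x)$. The enforced constraints, together with the Armijo condition, will reproduce the bounds $\frac{2(1-\gamma)}{L_f}$, $\frac{1}{\alpha}$, and $\frac{2\underline{w}}{\max_{i} L_i}$ exactly as in the full case; the genuinely new ingredient is to certify feasibility of the \emph{unenforced} constraints, which is where the term $\frac{\delta}{M}$ and the cap $t\le 1$ come from.

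First I would dispose of the sufficient-decrease requirement. Since \Cref{thm:direction_qcqp_as} provides the same descent inequality $\nabla f(x)^\top \hat{u}(x) \le -\|\hat{u}(x)\|_2^2$ as in the full problem, the $L_f$-smoothness argument of \Cref{lem:qcqp_step_size} carries over verbatim and yields the Armijo condition for every $t \le \frac{2(1-\gamma)}{L_f}$. Next I would treat the enforced constraints $i \in \mathcal{A}(x)$: because $\hat{u}(x)$ satisfies $\nabla g_i(x)^\top \hat{u}(x) \le -\alpha g_i(x) - w_i\|\hat{u}(x)\|_2^2$ for each such $i$, the identical $L_i$-smoothness computation gives $g_i(x + t\hat{u}(x)) \le (1-\alpha t)g_i(x) + \big(\tfrac{L_i t^2}{2} - w_i t\big)\|\hat{u}(x)\|_2^2 \le 0$ once $t \le \frac{1}{\alpha}$ and $t \le \frac{2w_i}{L_i}$; invoking $w_i \ge \underline{w}$, it suffices to require $t \le \frac{2\underline{w}}{\max_i L_i}$.

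The main obstacle is the unenforced block $i \notin \mathcal{A}(x)$, for which the reduced QCQP offers no direct feasibility constraint. Here I would exploit two facts. First, since $\mathcal{A}_\delta(x) \subseteq \mathcal{A}(x)$, any index $i \notin \mathcal{A}(x)$ lies outside $\mathcal{A}_\delta(x)$ and hence enjoys the strict margin $g_i(x) < -\delta$. Second, by \Cref{lem:qcqp_as_bounded}—which is precisely why the compactness \Cref{ass:assumption_as} was introduced—the directions $\hat{u}(x)$ are uniformly bounded over $x \in \mathcal{F}$, while $\nabla g_i$ is bounded on the compact set $\mathcal{F}$. These two boundedness facts let me define the finite positive constant
\[
M := \max_{i \in [m]} \sup_{x \in \mathcal{F}} \Big( \nabla g_i(x)^\top \hat{u}(x) + \tfrac{L_i}{2}\|\hat{u}(x)\|_2^2 \Big).
\]
Then, for $t \le 1$ I use $t^2 \le t$ together with $L_i$-smoothness to obtain $g_i(x + t\hat{u}(x)) \le g_i(x) + tM < -\delta + tM$, which is nonpositive as soon as $t \le \frac{\delta}{M}$.

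Intersecting the step-size constraints from the three parts—$\frac{2(1-\gamma)}{L_f}$ from Armijo, $\frac{1}{\alpha}$ and $\frac{2\underline{w}}{\max_i L_i}$ from the enforced constraints, and $1$ together with $\frac{\delta}{M}$ from the unenforced constraints—yields exactly the claimed $\underline{t}$ in~\eqref{eq:active_step_lower_bound}, so both backtracking conditions hold for all $t \in [0,\underline{t}]$. The one subtlety I would flag is that $M$ must be taken strictly positive for $\frac{\delta}{M}$ to be well defined; if the supremum above is nonpositive, the unenforced constraints are automatically satisfied for all $t \le 1$ and that term is vacuous, so replacing $M$ by $\max\{M, 1\}$ (say) causes no loss.
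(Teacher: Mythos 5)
Your proof is correct, and it follows the same overall decomposition as the paper's: Armijo via the descent property of \Cref{thm:direction_qcqp_as}, enforced constraints $i \in \mathcal{A}(x)$ handled exactly as in \Cref{lem:qcqp_step_size}, and the unenforced constraints $i \notin \mathcal{A}(x)$ handled via the margin $g_i(x) \le -\delta$ together with the uniform boundedness from \Cref{lem:qcqp_as_bounded}. Where you genuinely diverge is in the treatment of the unenforced block: the paper works with the quadratic upper bound $s_i(t)$, computes its largest nonnegative root $\eta_i$ explicitly, and then applies the mean value theorem to lower-bound $\eta_i$ by $\min\{1, \delta/M\}$ with $M \ge |\nabla g_i(x)^\top \hat{u}(x)| + L_i\|\hat{u}(x)\|_2^2$. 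Your argument replaces all of this with the elementary observation that $t^2 \le t$ for $t \in [0,1]$, which linearizes the smoothness bound to $g_i(x+t\hat{u}(x)) \le g_i(x) + t\bigl(\nabla g_i(x)^\top \hat{u}(x) + \tfrac{L_i}{2}\|\hat{u}(x)\|_2^2\bigr)$ and yields the same conclusion in two lines, with a smaller (hence better) constant $M$ since you avoid the absolute value and keep the factor $\tfrac{1}{2}$. Your closing remark about forcing $M > 0$ via $\max\{M,1\}$ also patches an edge case the paper sidesteps implicitly (its $M$ bounds a nonnegative quantity, so positivity is automatic there but your signed supremum needs the fix you gave). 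One point worth noting for both proofs: the lemma's stated hypotheses cite only \Cref{ass:assumption}, yet both your argument and the paper's require the compactness \Cref{ass:assumption_as} through \Cref{lem:qcqp_as_bounded}; you flagged this dependence explicitly, which is the right instinct.
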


\begin{proof}
    From \Cref{thm:direction_qcqp_as}, we have $\nabla f(x)^\top  \hat{u}(x) \le -\|\hat{u}(x)\|_2^2$. 
    Hence, for all $t \in [0, \tfrac{2(1-\gamma)}{L_f}]$, the Armijo condition is satisfied, i.e.,
    \begin{align}
        & f(x + t\hat{u}(x)) 
        \le f(x) + t\nabla f(x)^\top  \hat{u}(x)
        + \tfrac{L_f t^2}{2}\|\hat{u}(x)\|_2^2 \\
        & \le f(x) + \gamma t\nabla f(x)^\top  \hat{u}(x). \notag
    \end{align}
    The proof follows the same steps as in \Cref{lem:qcqp_step_size} for the full QCQP case.

    For feasibility, consider two cases. 
    
    \textit{Case 1:} $i \in \mathcal{A}(x)$.   Then, by the constraint in \eqref{eq:direction_qcqp_as},  $g_i(x + t\hat{u}(x)) \le 0$ for all  $t \in [0, \min\{\tfrac{1}{\alpha}, \tfrac{2\underline{w}}{L_i}\}]$, as in \Cref{lem:qcqp_step_size}. 

    \textit{Case 2:} $i \notin \mathcal{A}(x)$.  
    Since $\mathcal{A}_\delta(x) \subseteq \mathcal{A}(x)$, we have $g_i(x) \le -\delta$. Define
    \[
        s_i(t) \triangleq g_i(x) + t\nabla g_i(x)^\top  \hat{u}(x)
        + \tfrac{L_i t^2}{2}\|\hat{u}(x)\|_2^2,
    \]
    which is a quadratic upper bound on $g_i(x + t\hat{u}(x))$. The largest nonnegative root $\eta_i$ of $s_i(t) = 0$ is given by
    \begin{align*}
    \eta_i 
    = \tfrac{
        -\nabla g_i(x)^\top  \hat{u}(x)
        + \sqrt{
            (\nabla g_i(x)^\top  \hat{u}(x))^2
            - 2L_i g_i(x)\|\hat{u}(x)\|_2^2
        }
    }{L_i \|\hat{u}(x)\|_2^2}.
    \end{align*}
    By construction, $s_i(0) = g_i(x) \le -\delta < 0$, $s_i(\eta_i) = 0$, and for all $t \in [0,\eta_i]$, 
    \[
        g_i(x + t\hat{u}(x)) \le s_i(t) \le 0.
    \]
    Applying the mean value theorem, there exists $\theta_i \in [0, \eta_i]$ such that
    \begin{align*}
        -g_i(x)
        &= s_i(\eta_i) - s_i(0)
         = s_i'(\theta_i)\eta_i \\
        &= \eta_i \big[\nabla g_i(x)^\top  \hat{u}(x)
            + L_i\theta_i\|\hat{u}(x)\|_2^2\big] \\ &
         \le \eta_i \big[|\nabla g_i(x)^\top  \hat{u}(x)|
            + L_i \theta_i \|\hat{u}(x)\|_2^2\big].
    \end{align*}
    If $\eta_i \le 1$, then since $\theta_i \leq \eta_i$, we can write 
    \[
        -g_i(x) \le \eta_i \big[|\nabla g_i(x)^\top  \hat{u}(x)|
            + L_i\|\hat{u}(x)\|_2^2\big].
    \]
    Since $-g_i(x) \ge \delta$, we have
    \[
        \eta_i \ge 
        \frac{\delta}{
            |\nabla g_i(x)^\top  \hat{u}(x)|
            + L_i\|\hat{u}(x)\|_2^2
        }.
    \] 
    Since $\hat{u}(x)$ and $\nabla g_i(x)$ are bounded over the compact feasible set $\mathcal{F}$, there exists a constant $M > 0$ such that
    \[
        |\nabla g_i(x)^\top  \hat{u}(x)|
        + L_i\|\hat{u}(x)\|_2^2 \le M,
        \qquad \forall\, x \in \mathcal{F},~ i \in [m].
    \]
    Hence, $\eta_i \ge \tfrac{\delta}{M}$.  Eliminating the temporary assumption $\eta_i \le 1$, we obtain
    \[
        \eta_i \ge \min\Big\{1,\, \tfrac{\delta}{M}\Big\}.
    \]
    Combining all cases, we conclude that:
    \begin{itemize}
        \item The Armijo condition holds for all 
        $t \in [0, \tfrac{2(1-\gamma)}{L_f}]$;
        \item For all $i \in \mathcal{A}(x)$, 
        $g_i(x + t\hat{u}(x)) \le 0$ for 
        $t \in [0, \min\{\tfrac{1}{\alpha}, \tfrac{2\underline{w}}{L_i}\}]$;
        \item For all $i \notin \mathcal{A}(x)$, 
        $g_i(x + t\hat{u}(x)) \le 0$ for 
        $t \in [0, \min\{1, \tfrac{\delta}{M}\}]$.
    \end{itemize}
    Therefore, the backtracking conditions~\eqref{eq:backtracking_line_search} hold for all 
    $t \in [0, \underline{t}]$, where $\underline{t}$ is given by~\eqref{eq:active_step_lower_bound}.
\end{proof}
\Cref{thm:direction_qcqp_as} and \Cref{lem:qcqp_as_step_size} prove similar results to \Cref{thm:direction_qcqp} and \Cref{lem:qcqp_step_size}, respectively, enabling us to extend \Cref{alg:SQCQP} with active set strategy while preserving theoretical convergence guarantees. The algorithm is summarized in \Cref{alg:SQCQP-AS}.

\begin{algorithm}[H]
\caption{SS-QCQP with Active Set Strategy (SS-QCQP-AS) for solving~\eqref{eq:optimization_problem}}
\label{alg:SQCQP-AS}
\begin{algorithmic}[1]
    \State \textbf{Input:} initial feasible point $x^{(0)}$ such that $g_i(x^{(0)}) \le 0$ for all $i\in [m]$; 
    parameters {\color{darkblue}$\delta>0$}, $\gamma \in (0,1)$, $\underline{w} > 0$, $\alpha > 0$, and tolerance $\epsilon > 0$.
    \For{$k = 0,1,2,\ldots$}
        \State Choose adaptive parameters $\{\,w_i^{(k)} \ge \underline{w} : i\in [m]\}$.
        \State {\color{darkblue}Choose active set such that $\mathcal{A}_\delta(x) \subseteq \mathcal{A}(x)$\Comment{Active set strategy}}
        \State Compute the search direction $u = \hat{u}(x^{(k)})$ by solving~{\color{darkblue}\eqref{eq:direction_qcqp_as}} with $w_i = w_i^{(k)}$.
        \If{$\|u\|_2 \le \epsilon$}
            \State \Return $x^{(k)}$ \Comment{Convergence achieved}
        \EndIf
        \State Initialize step size $t^{(k)} = 1$.
        \While{
            $\exists i \in [m]$ such that 
            $g_i(x^{(k)} + t^{(k)} u) > 0$
            \textbf{or}
            $f(x^{(k)} + t^{(k)} u) > f(x^{(k)}) + \gamma t^{(k)} \nabla f(x^{(k)})^\top  u$
        }
            \State $t^{(k)} := \frac{1}{2} t^{(k)}$ \Comment{Backtracking line search}
        \EndWhile
        \State Update $x^{(k+1)} := x^{(k)} + t^{(k)} u$.
    \EndFor
\end{algorithmic}
\end{algorithm}

\begin{theorem}\label{thm:convergence_sqcqp_as}
Let \Cref{ass:assumption} and \Cref{ass:assumption_as} hold for ~\eqref{eq:optimization_problem}. 
Then, there exists a constant $M > 0$ such that, for all iterations $k = 0,1,2,\ldots$ generated by \Cref{alg:SQCQP}, the following bound holds:
\begin{align}\label{eq:sqcqp_as_convergence_bound}
    \min_{i = 0,\ldots,k} \|u(x^{(i)})\|_2^2
    \le 
    \frac{f(x^{(0)}) - f^\star}{\gamma\,\underline{t}\,(k+1)},
\end{align}
where
\begin{align}\label{eq:sqcqp_as_step_bound}
    \underline{t}
    = \min\bigg\{
        \tfrac{2(1-\gamma)}{L_f},\,
        1,\,
        \tfrac{\delta}{M},\,
        \tfrac{1}{\alpha},\,
        \tfrac{2\underline{w}}{\max_{i \in [m]} L_i}
    \bigg\}.
\end{align}
\end{theorem}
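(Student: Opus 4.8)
The plan is to mirror, essentially verbatim, the telescoping argument already used in the proof of \Cref{thm:convergence_sqcqp}, substituting the active-set ingredients for their full-QCQP counterparts. The two facts I need are both in hand: the descent inequality $\nabla f(x^{(k)})^\top \hat{u}(x^{(k)}) \le -\|\hat{u}(x^{(k)})\|_2^2$ from \Cref{thm:direction_qcqp_as}, and the uniform step-size floor $t^{(k)} \ge \underline{t}$ from \Cref{lem:qcqp_as_step_size}, where $\underline{t}$ is given by \eqref{eq:sqcqp_as_step_bound}. The only structural difference from the full case is that $\underline{t}$ now also involves the active-set threshold $\delta$ and the uniform constant $M$; the argument itself is unchanged once these are known to be fixed, iterate-independent positive constants.

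First I would invoke \Cref{lem:qcqp_as_step_size} to certify that the backtracking loop of \Cref{alg:SQCQP-AS} terminates with an accepted step size no smaller than $\underline{t}$. Combining the Armijo acceptance condition from \eqref{eq:backtracking_line_search} with the descent property then yields, at each iteration,
\begin{align*}
f(x^{(k+1)}) \le f(x^{(k)}) + \gamma t^{(k)} \nabla f(x^{(k)})^\top \hat{u}(x^{(k)}) \le f(x^{(k)}) - \gamma \underline{t}\,\|\hat{u}(x^{(k)})\|_2^2.
\end{align*}
Summing over $i = 0,\ldots,k$, telescoping, and using $f(x^{(k+1)}) \ge f^\star$ gives $\sum_{i=0}^{k} \gamma\underline{t}\,\|\hat{u}(x^{(i)})\|_2^2 \le f(x^{(0)}) - f^\star$. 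Dividing by $\gamma\underline{t}(k+1)$ and bounding the minimum term by the average establishes the claimed rate \eqref{eq:sqcqp_as_convergence_bound}.

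The main obstacle is not the telescoping, which is routine, but rather justifying that $\underline{t}$ is a \emph{single} positive constant valid along the whole trajectory rather than an iterate-dependent quantity that could shrink to zero. This is exactly where \Cref{ass:assumption_as} is needed: compactness of $\mathcal{F}$, through \Cref{lem:qcqp_as_bounded}, guarantees that the reduced directions $\{\hat{u}(x) : x \in \mathcal{F}\}$ are uniformly bounded, which in turn furnishes the iterate-independent constant $M$ controlling the feasibility margin $\delta/M$ for the inactive constraints ($i \notin \mathcal{A}(x)$) in \Cref{lem:qcqp_as_step_size}. I would therefore be careful to record that all iterates $x^{(k)}$ remain in $\mathcal{F}$ by the anytime-feasibility guarantee enforced through the line search, so that the uniform bounds of \Cref{lem:qcqp_as_bounded} and \Cref{lem:qcqp_as_step_size} indeed apply at every $x^{(k)}$, and hence that the same $\underline{t}$ and $M$ can be pulled out of the sum. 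With this uniformity secured, the bound follows immediately, and the ergodic version follows by the same averaging as in the full-QCQP case.
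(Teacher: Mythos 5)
Your proposal is correct and follows essentially the same route as the paper, whose proof of this theorem is simply the observation that, given the uniform step-size lower bound from \Cref{lem:qcqp_as_step_size}, the telescoping argument of \Cref{thm:convergence_sqcqp} applies verbatim with $\hat{u}$ in place of $u$. Your additional care in noting that \Cref{ass:assumption_as} and \Cref{lem:qcqp_as_bounded} are what make $M$ (and hence $\underline{t}$) a single iterate-independent constant, valid because the line search keeps every iterate in $\mathcal{F}$, is exactly the implicit content of the paper's citation of \Cref{lem:qcqp_as_step_size}, so the two arguments coincide.
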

\begin{proof}
    Given a lower bound on step size from \Cref{lem:qcqp_as_step_size}, the proof is the same as in \Cref{thm:convergence_sqcqp}.
\end{proof}

\section{Practical Implementation}

\subsection{Efficient Computation of the Search Direction}
The optimization problem~\eqref{eq:direction_qcqp} (or its reduced form~\eqref{eq:direction_qcqp_as}) is convex and can be solved efficiently using modern conic solvers such as ECOS~\cite{domahidi2013ecos} and Clarabel~\cite{goulart2024clarabel}, after reformulating the quadratic constraints as second-order cone (SOC) constraints.  
While these solvers are robust and well optimized, the efficiency of each iteration in SS-QCQP depends critically on how the problem is expressed in conic form. To improve practical performance, we exploit a structural feature of~\eqref{eq:direction_qcqp}, where both the objective and the constraints are quadratic functions with identity Hessians.  For simplicity, we describe the formulation when all $m$ constraints are included in the subproblem; the same approach applies directly to the reduced QCQP~\eqref{eq:direction_qcqp_as}.

Introducing an auxiliary variable $s$, problem~\eqref{eq:direction_qcqp} can be equivalently written as
\begin{align}\label{eq:direction_qcqp_transformed}
    \hat{u}(x), \hat{s}(x)
    &= \arg\min_{u,\,s}~ \tfrac{1}{2}\|u + \nabla f(x)\|_2^2 \\
    \text{s.t.}~&
    \nabla g_i(x)^\top u + w_i s + \alpha g_i(x) \le 0, 
    \quad \forall i \in [m], \notag\\
    &\|u\|_2^2 \le s. \notag
\end{align}
See ~\Cref{lem:qcqp_epigraph} below for proof of equivalence. Although the transformation~\eqref{eq:direction_qcqp_transformed} is conceptually straightforward, it significantly simplifies the problem representation for off-the-shelf conic solvers. 
For example, Clarabel~\cite{goulart2024clarabel} solves generic conic programs of the form
\begin{align}
    \min_{x,\,s}~& \tfrac{1}{2} x^\top P x + q^\top x \\
    \text{s.t.}~& A x + s = b, \notag\\
                & s \in \mathcal{K}, \notag
\end{align}
where $\mathcal{K}$ denotes a convex cone (e.g., a second-order cone).  
When modeling~\eqref{eq:direction_qcqp} through high-level interfaces such as CVXPY~\cite{diamond2016cvxpy}, the auxiliary-variable form~\eqref{eq:direction_qcqp_transformed} leads to a sparser coefficient matrix $A$ and a more compact representation, yielding consistent reductions in solver runtime across iterations.

\begin{lemma}\label{lem:qcqp_epigraph}
Problems~\eqref{eq:direction_qcqp} and~\eqref{eq:direction_qcqp_transformed} are equivalent in the sense that, if 
$(\hat{u}(x), \hat{s}(x))$ is an optimal solution of~\eqref{eq:direction_qcqp_transformed}, then 
$\hat{u}(x)$ is the unique optimal solution of~\eqref{eq:direction_qcqp}.
\end{lemma}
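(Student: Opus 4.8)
The plan is to exploit that \eqref{eq:direction_qcqp} and \eqref{eq:direction_qcqp_transformed} share the identical objective $\phi(u) := \tfrac{1}{2}\|u + \nabla f(x)\|_2^2$, which depends on $u$ alone and not on the auxiliary variable $s$. Consequently, proving equivalence reduces to showing that the two problems have the same optimal value and that the $u$-component of any transformed optimizer minimizes $\phi$ over the original feasible set. I would denote by $\mathcal{U}$ the feasible set of \eqref{eq:direction_qcqp} and by $\mathcal{V}$ the feasible set of \eqref{eq:direction_qcqp_transformed}, and the heart of the argument is to establish that the projection of $\mathcal{V}$ onto the $u$-coordinates equals $\mathcal{U}$.

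First I would establish the two inclusions defining this projection equality. For the inclusion $\mathcal{U} \subseteq \mathrm{proj}_u \mathcal{V}$, given any $u \in \mathcal{U}$ I set $s = \|u\|_2^2$; then $\|u\|_2^2 \le s$ holds with equality and each affine-in-$s$ constraint $\nabla g_i(x)^\top u + w_i s + \alpha g_i(x) \le 0$ reduces exactly to the original quadratic constraint, so $(u, \|u\|_2^2) \in \mathcal{V}$. For the reverse inclusion, given any $(u, s) \in \mathcal{V}$, I use $w_i > 0$ together with $\|u\|_2^2 \le s$ to bound $\nabla g_i(x)^\top u + w_i \|u\|_2^2 + \alpha g_i(x) \le \nabla g_i(x)^\top u + w_i s + \alpha g_i(x) \le 0$, which shows $u \in \mathcal{U}$. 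This is the key step and also the main (though mild) obstacle: it is precisely the positivity of the penalty weights $w_i$ that guarantees enlarging $s$ only tightens the affine constraints, so relaxing $\|u\|_2^2$ to the epigraph variable $s$ does not enlarge the admissible set of $u$.

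With this projection equality in hand, since both problems minimize the same $\phi(u)$, their optimal values coincide, i.e.\ $\min_{(u,s)\in\mathcal{V}} \phi(u) = \min_{u \in \mathcal{U}} \phi(u)$. Hence, if $(\hat{u}(x), \hat{s}(x))$ is optimal for \eqref{eq:direction_qcqp_transformed}, then $\hat{u}(x) \in \mathcal{U}$ attains this common optimal value and is therefore a minimizer of \eqref{eq:direction_qcqp}. Finally, to conclude that $\hat{u}(x)$ is the \emph{unique} optimizer, I would observe that $\phi$ is strongly convex in $u$ (its Hessian is the identity) while $\mathcal{U}$ is convex, being an intersection of Euclidean balls $\{u : w_i\|u\|_2^2 + \nabla g_i(x)^\top u + \alpha g_i(x) \le 0\}$ with $w_i > 0$; strong convexity of the objective over a convex feasible set forces the minimizer to be unique, so $\hat{u}(x) = u(x)$.
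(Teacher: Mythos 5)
Your proof is correct and follows essentially the same route as the paper's: both arguments rest on the same two feasibility maps (for $u$ feasible in \eqref{eq:direction_qcqp}, lift to $(u,\|u\|_2^2)$; for $(u,s)$ feasible in \eqref{eq:direction_qcqp_transformed}, use $w_i>0$ and $\|u\|_2^2\le s$ to drop back to $u$), conclude equality of optimal values, and invoke strong convexity of the objective for uniqueness. Your phrasing via the projection $\mathrm{proj}_u\mathcal{V}=\mathcal{U}$ is just a repackaging of the paper's comparison $p_1^\star=p_2^\star$, so there is no substantive difference.
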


\begin{proof}
Let $p_1^\star$ and $p_2^\star$ denote the optimal values of~\eqref{eq:direction_qcqp} 
and~\eqref{eq:direction_qcqp_transformed}, respectively.  For any $(u,s)$ feasible to~\eqref{eq:direction_qcqp_transformed}, 
the first constraint implies that $u$ is feasible for~\eqref{eq:direction_qcqp}.  
Since both problems share the same objective, it follows that 
$p_1^\star \le p_2^\star$. Conversely, for any $u$ feasible to~\eqref{eq:direction_qcqp}, 
the pair $(u, \|u\|_2^2)$ is feasible for~\eqref{eq:direction_qcqp_transformed} 
and yields the same objective value, implying $p_2^\star \le p_1^\star$.  
Thus, $p_1^\star = p_2^\star$.  

Consequently, if $(\hat{u}(x), \hat{s}(x))$ is an optimal solution of~\eqref{eq:direction_qcqp_transformed}, 
then $\hat{u}(x)$ attains the same optimal value in~\eqref{eq:direction_qcqp}, 
and hence is optimal for that problem.  
Since the objective of~\eqref{eq:direction_qcqp} is strongly convex, 
the solution $\hat{u}(x)$ is unique.
\end{proof}

\subsection{Hyperparameter Choice}\label{sec:w_choice}
Another important practical aspect of the proposed SS-QCQP algorithm is the selection of the parameters $\alpha$ and $\{w_i\}_{i=1}^m$ in~\eqref{eq:direction_qcqp}.  
As illustrated in~\Cref{fig:directions}, when $w_i$ is small, the direction $\hat{u}(x)$ becomes nearly tangent to the constraint boundary, leading to a small step size required to maintain feasibility.  
Conversely, when $w_i$ is large, $\hat{u}(x)$ deviates significantly from the negative gradient direction, resulting in a smaller decrease in the objective value.  
Therefore, the choice of $w_i$ critically affects the empirical performance of~\Cref{alg:SQCQP}.

Inspired by the MBA method~\cite{auslender2010moving}, we fix $\alpha = 1$ and set $w_i \approx \frac{L_i}{2}$.  
However, since $L_i$ is typically unknown in practice, we adaptively estimate it during iterations.  
We initialize each $w_i^{(0)} = \underline{w}$ with a small positive value (e.g., $10^{-3}$), and update it according to
\begin{align}\label{eq:adaptive_w_update}
    w_i^{(k+1)} 
    = \max\!\left\{
        w_i^{(k)},
        \tfrac{
            \|\nabla g_i(x^{(k+1)}) - \nabla g_i(x^{(k)})\|_2
        }{
           2 \|x^{(k+1)} - x^{(k)}\|_2
        }
    \right\},
    \quad \forall i \in [m].
\end{align}
Since the sequence $\{w_i^{(k)}\}$ is nondecreasing, the convergence guarantees established in \Cref{thm:convergence_sqcqp} and \Cref{thm:convergence_sqcqp_as} remain valid.

\subsection{Active Set Choice}\label{sec:as_choice}
In~\Cref{lem:qcqp_as_step_size} and~\Cref{thm:convergence_sqcqp_as}, we only required that 
$\mathcal{A}_\delta(x) \subseteq \mathcal{A}(x)$, allowing flexibility in the choice of $\mathcal{A}$.  A natural and straightforward option is to set $\mathcal{A}(x) = \mathcal{A}_\delta(x)$.  
However, with this choice, even small perturbations in $x$ can cause significant changes in $\mathcal{A}_\delta(x)$, which in turn may induce large variations in the direction $\hat{u}(x)$.  
Such discontinuities can produce zigzagging trajectories and degrade the empirical performance of the algorithm. To improve stability, we propose to use 
    $\mathcal{A}(x) = \mathcal{A}_\delta(x) \cup \mathcal{T}_q(x)$, where
\begin{align}\label{eq:quantile_active_set}
    \mathcal{T}_q(x) = \{i : g_i(x) \text{ is among top $q \%$ of } \{g_i(x) : i\in [m]\}\}.
\end{align}
This construction smooths transitions in $\mathcal{A}(x)$ by including a small fraction of nearly active constraints, effectively balancing scalability and stability.

\section{Numerical Experiments}
In this section, we demonstrate the effectiveness of the proposed SS-QCQP algorithm on an optimal control problem involving multi-agent navigation.  
The objective is to steer four vehicles from their initial positions to their designated goal locations while avoiding collisions with both static obstacles and other vehicles. The problem setup is similar to that in~\cite{wu2025accelerated}.

The problem is summarized as follows,
\begin{align} \label{eq:MPC_experiment}
    \min_{X, U}. ~& \sum_{t=0}^{N-1} l(X(t), U(t)) + V(X(N)) \\
    \text{s.t.} ~& U_i(t) \in \mathcal{U} ~\forall t \in [N-1], i \in [4],  \notag\\
                ~& X_i(t) \in \mathcal{X} ~\forall t \in [N], i \in [4], \notag  \\
                ~& X_i(t+1) = f(X_i(t), U_i(t)) ~\forall t \in [N-1], i \in [4], \notag \\
                ~& X_i(0) = X_i^s ~\forall i \in [4]\notag \\
                ~~& g_j(X_i(t)) \leq 0 ~ \forall i \in [4], j \in [3], t \in [N],
\end{align}
where $X_i(t) = [x_i(t),\, y_i(t),\, \theta_i(t)]^\top$ is the state of the $i$-th car at time step $t$, consisting of the $x$-position, $y$-position, and orientation, and $U_i(t) = [v_i(t),\, w_i(t)]^\top$ is its control input, consisting of the linear and angular velocities.  
The discrete-time dynamics, shared by all cars, are given by
\[
    f(X_i(t), U_i(t)) =
    \begin{bmatrix}
        x_i(t) + v_i(t)T\cos\theta_i(t) - \Delta x \\
        y_i(t) + v_i(t)T\sin\theta_i(t) \\
        \theta_i(t) + w_i(t)T
    \end{bmatrix}.
\]
where $T=\Delta x = 0.03$.
Here, $X(t) = [X_i(t)^\top]_{i\in[4]}^\top$ stacks the states of all four cars at time~$t$, and 
$X = [X(t)^\top]_{t\in[N]}$ and $U = [U(t)^\top]_{t\in[N-1]}$ denote the stacked states and control inputs over the entire horizon.

 The initial and target states are specified as follows. The starting state of the $i$-th vehivle is denoted by $X_i^s$, with
 \[
\begin{aligned}
X_1^s &= [-2,\,-2,\,0]^\top, &\quad X_2^s &= [-3,\,-1,\,0]^\top,\\
X_3^s &= [-3,\,-3,\,0]^\top, &\quad X_4^s &= [-1,\,-3,\,0]^\top, 
\end{aligned}
\]
and the corresponding target (desired) states are
\[
\begin{aligned}
X_1^d &= [2,\,3,\,0]^\top, &\quad X_2^d &= [3,\,2,\,0]^\top,\\
X_3^d &= [2,\,1,\,0]^\top, &\quad X_4^d &= [1,\,2,\,0]^\top.
\end{aligned}
\]
The desired steady-state input is $U_i^d = [1,\,0]^\top$ for all $i \in [4]$. The stage cost is defined as
\[
    l(X(t), U(t)) 
    = \sum_{i \in [4]} 
        \big(
            \|X_i(t) - X_i^d\|_2^2 
            + 0.01\,\|U_i(t) - U_i^d\|_2^2
        \big),
\]
and the terminal cost is
\[
    V(X(N)) 
    = \sum_{i \in [4]} 
        \|X_i(N) - X_i^d\|_{P_i}^2,
\]
where each $P_i$ is obtained from the discrete-time algebraic Riccati Equation 
\begin{small}
\begin{align*}
    A_i^\top P_i A_i - P_i - (A_i^\top P_i B_i)(R + B_i^\top P_i B_i)^{-1}(B_i^\top P_i A_i) + Q = 0
\end{align*}
\end{small}
with $A_i = \nabla_1 f(X_i^d, U_i^d)$, $B_i = \nabla_2 f(X_i^d, U_i^d)$, $Q=I$ and $R=0.01 I$. The state and input constraints are defined as
\[
\begin{aligned}
\mathcal{X} &= 
\{(x, y, \theta) : |x| \le 3.7, |y| \le 3.7,~ |\theta| \le \pi\},\\
\mathcal{U} &= 
\{(v, w) : -5 \le v \le 12,~  |w| \le \tfrac{3}{2}\pi\}.
\end{aligned}
\]
Obstacle avoidance is enforced through the constraint
\[
    g_j(X_i(t)) = r_j^2 - \|X_i(t) - c_j\|_2^2 \le 0,
\]
which models circle obstacles centered at $c_1 = [-1,-1]^\top, c_2 = [1,0]^\top, c_3 = [0,1]^\top$ with radii $r_1 = 1, r_2 = 0.5, r_3 = 0.5$. 

By unrolling the dynamics from the starting state and applying the control inputs, \eqref{eq:MPC_experiment} can be transformed into an optimization problem in form of \eqref{eq:optimization_problem}. We set the planning horizon to $N=40$, resulting in a total number of $320$ variables and $2320$ constraints.

\Cref{alg:SQCQP} and \Cref{alg:SQCQP-AS} were implemented in CasADi~\cite{andersson2019casadi}, and the corresponding subproblems~\eqref{eq:direction_qcqp} and~\eqref{eq:direction_qcqp_as} were solved using Clarabel~\cite{goulart2024clarabel} through the CVXPY~\cite{diamond2016cvxpy} interface.  
In both methods, the parameters were initialized as $w_i = 10^{-3}$ and adaptively updated according to~\eqref{eq:adaptive_w_update}.  
For the active-set strategy, we used $\mathcal{A}(x) = \mathcal{A}_{0.5}(x) \cup \mathcal{T}_5(x)$, where $\mathcal{A}_\delta(x)$ and $\mathcal{T}_q(x)$ are defined in~\eqref{eq:delta_active_set} and~\eqref{eq:quantile_active_set}, respectively. The experiments are conducted on a 2020 MacBook Pro equipped with an Apple M1 CPU and 16 GB of memory.

We compared SS-QCQP and SS-QCQP-AS against IPOPT~\cite{wachter2006implementation} (as called from CasADi) and CasADi's built-in SQP solver.  
\Cref{fig:1_traj} illustrates the position trajectories of the four agents obtained using SS-QCQP, SS-QCQP-AS, IPOPT, and SQP.  
The dotted lines, stars, and triangles represent the trajectories, initial positions, and target positions of the agents, respectively, while the gray circles denote obstacles.  
It can be observed that \Cref{alg:SQCQP}, \Cref{alg:SQCQP-AS}, and SQP yield similar near-optimal trajectories.  
\Cref{fig:1_dist_cars} further depicts the pairwise distances among the four agents over the planning horizon obtained using SS-QCQP.

\begin{figure}[ht]
\centering
\begin{subfigure}[t]{0.24\textwidth}
\includegraphics[scale=0.37,trim=50 20 50 20, clip]{./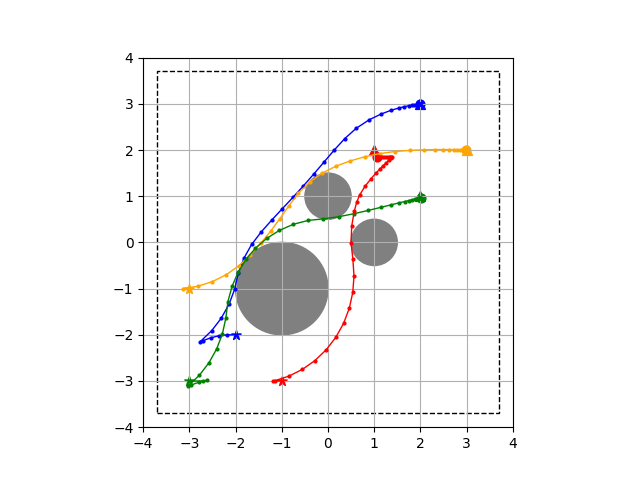}
\caption{SS-QCQP}
\end{subfigure}
\begin{subfigure}[t]{0.24\textwidth}
\includegraphics[scale=0.37,trim=50 20 50 20, clip]{./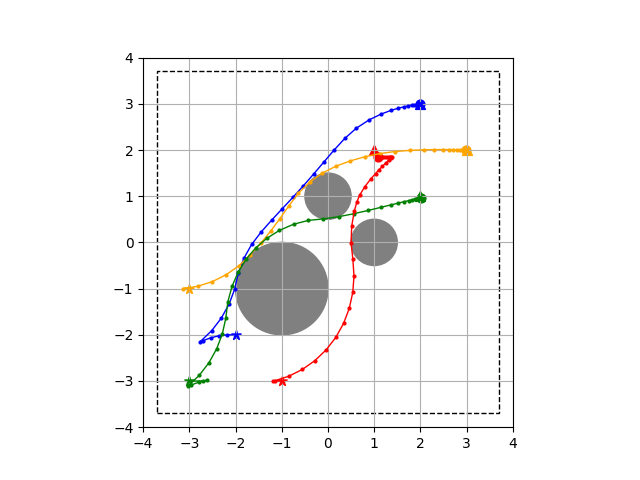}
\caption{SS-QCQP-AS}
\end{subfigure}

\begin{subfigure}[t]{0.24\textwidth}
\includegraphics[scale=0.37,trim=50 20 50 20, clip]{./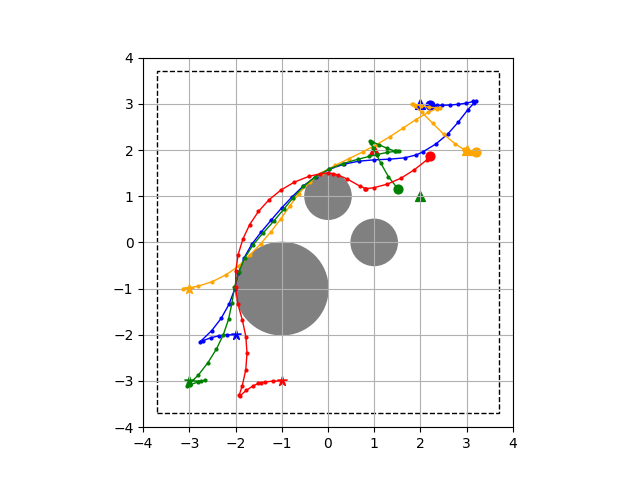}
\caption{IPOPT}
\end{subfigure}
\begin{subfigure}[t]{0.24\textwidth}
\includegraphics[scale=0.37,trim=50 20 50 20, clip]{./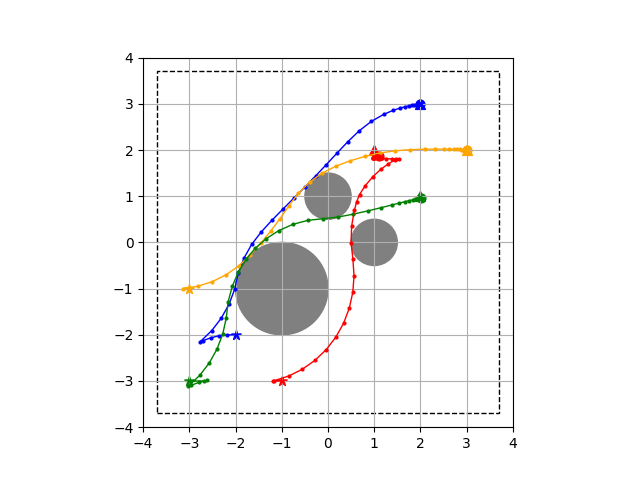}
\caption{SQP}
\end{subfigure}
\caption{Position trajectories of four agents obtained using SS-QCQP, SS-QCQP-AS, IPOPT, and SQP.}
\label{fig:1_traj}
\end{figure}

\begin{figure*}[ht]
  \centering
  \begin{subfigure}[t]{0.242\textwidth}
    \includegraphics[width=\textwidth, trim=0 0 50 0]{./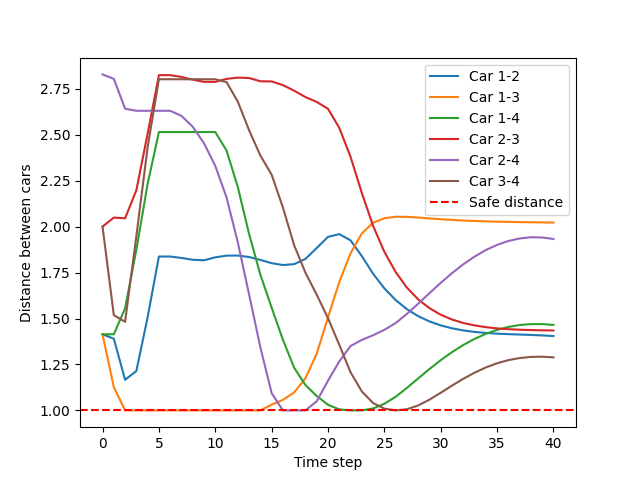}
    \caption{}
    \label{fig:1_dist_cars}
  \end{subfigure}
  \hfill
  \begin{subfigure}[t]{0.242\textwidth}
    \includegraphics[width=\textwidth, trim=0 0 50 0]{./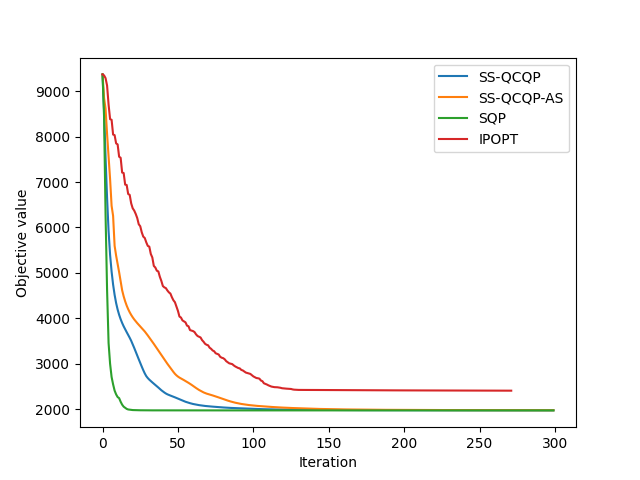}
    \caption{}
    \label{fig:1_fvals}
  \end{subfigure}
  \hfill
  \begin{subfigure}[t]{0.242\textwidth}
    \includegraphics[width=\textwidth, trim=0 0 50 0]{./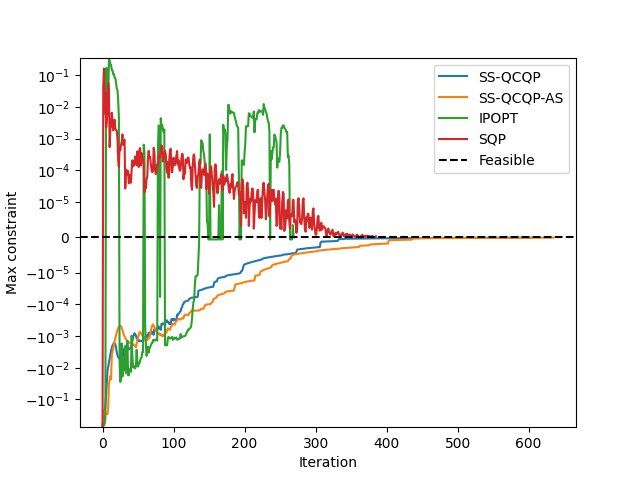}
    \caption{}
    \label{fig:1_feasibility}
  \end{subfigure}
  \hfill
  \begin{subfigure}[t]{0.242\textwidth}
    \includegraphics[width=\textwidth, trim=0 0 50 0]{./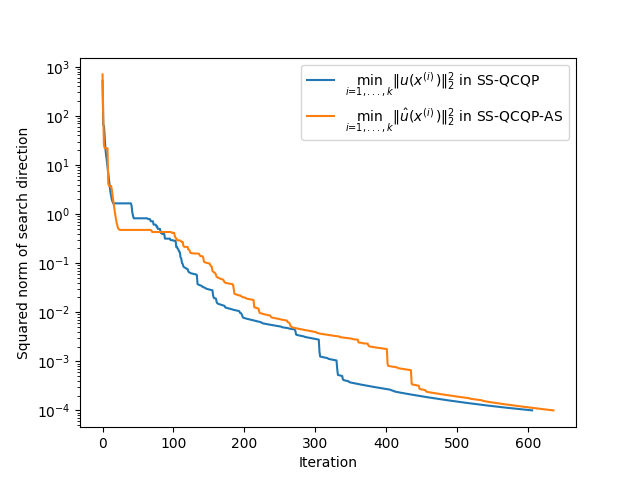}
    \caption{}
    \label{fig:1_u_norms}
  \end{subfigure}
  \caption{(a): Pairwise distances among the four agents over the planning horizon solved by SS-QCQP. (b): Objective of SS-QCQP, SS-QCQP-AS, IPOPT, and SQP along iterations. (c): Maximum constraint of SS-QCQP, SS-QCQP-AS, IPOPT, and SQP along iterations. (d): $\min_{i=1,...,k}\|u(x^{(i)})\|_2^2$ in SS-QCQP and $\min_{i=1,...,k}\|\hat{u}(x^{(i)})\|_2^2$ in SS-QCQP-AS along iterations.}

  \label{fig:four-wide}
\end{figure*}

\Cref{fig:1_fvals} and \Cref{fig:1_feasibility} plot, for each algorithm, the objective value and the maximum constraint violation across iterations, respectively.  
Furthermore, \Cref{fig:1_u_norms} shows the minimum squared norm of the search directions across iterations, which converges at the theoretical rate of $O(1/k)$ established in~\Cref{thm:convergence_sqcqp,thm:convergence_sqcqp_as}.  
Finally, \Cref{fig:1_active_constraints} shows the number of nearly active constraints at each iteration, illustrating that fewer than $10\%$ of all constraints are nearly active and enter the QCQP subproblem. The runtime of \Cref{alg:SQCQP} is 186.53 seconds, whereas \Cref{alg:SQCQP-AS} achieves a comparable solution in 49.47 seconds.

It can be observed that while the SQP method decreases the objective value more rapidly than both SS-QCQP and SS-QCQP-AS, it exhibits small constraint violations before convergence.  
In contrast, both SS-QCQP and SS-QCQP-AS remain feasible throughout all iterations, albeit with a slower decrease in the objective value.  
This property enables early termination with feasibility guarantees.  
It is worth noting that both SQP and IPOPT are second-order methods (in this experiment, CasADi’s built-in SQP employs an L-BFGS Hessian approximation), whereas SS-QCQP and SS-QCQP-AS are first-order algorithms that achieve comparable solution quality while ensuring feasibility at every iteration.

\begin{figure}[ht]
\centering
\includegraphics[scale=0.5, trim=0 0 0 30]{./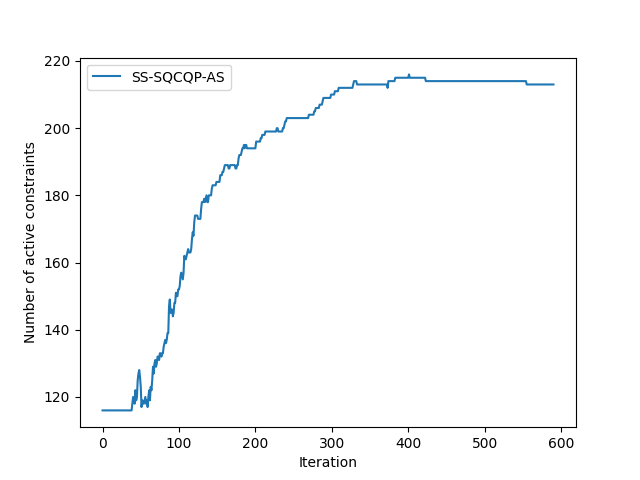}
\caption{Number of active constraints in \Cref{alg:SQCQP-AS} along the iterations. The total number of constraints is 2320.}
\label{fig:1_active_constraints}
\end{figure}

\section{Conclusion}
This paper introduced the Safe Sequential QCQP (SS-QCQP) algorithm, a first-order framework for solving inequality-constrained nonlinear optimization problems with guaranteed feasibility at every iteration.  
The method is derived from a continuous-time dynamical system whose vector field is obtained by solving a convex quadratically constrained quadratic program (QCQP) that enforces strict descent and forward invariance of the feasible set.  
A safeguarded discretization with adaptive step-size selection yields a discrete-time algorithm that maintains feasibility and achieves a provable $O(1/k)$ convergence rate.  

To improve scalability, we developed an active-set variant (SS-QCQP-AS) that selectively enforces invariance for nearly active constraints, significantly reducing computational cost without compromising convergence guarantees.  Extensive numerical experiments on a multi-agent optimal control problem confirmed that both SS-QCQP and SS-QCQP-AS maintain feasibility, exhibit the predicted convergence behavior, and deliver performance comparable to second-order solvers such as SQP and IPOPT.  

Future work will explore the integration with model predictive control (MPC) schemes, as well as acceleration strategies using higher-order derivatives.

\bibliographystyle{IEEEtran}
\bibliography{refs}

\end{document}